\documentclass[12pt,a4paper]{article}
\usepackage{a4wide}

\usepackage[T1]{fontenc}
\usepackage{microtype}
\usepackage{amsthm,amsmath,amssymb}
\usepackage{enumitem}
\usepackage{graphicx}
\usepackage[pdftitle={Induced Ramsey-type results and binary predicates for point sets}, pdfauthor={M. Balko, J. Kyncl, A. Pilz, S. Langerman}]{hyperref}
\usepackage{lmodern}

\usepackage{color}

\definecolor{modra3}{rgb}{.1,.0,.4}
\hypersetup{colorlinks=true, linkcolor=modra3, urlcolor=modra3, citecolor=modra3, pdfpagemode=UseNone, pdfstartview=} 

\def\Pr{\mathbf{P}}
\def\inst#1{$^{#1}$}

%--------------------------------------------------------------------
\newtheorem{theorem}{Theorem}
\newtheorem{lemma}[theorem]{Lemma}
\newtheorem{corollary}[theorem]{Corollary}
\newtheorem{proposition}[theorem]{Proposition}

\newtheorem{observation}[theorem]{Observation}
\newtheorem{problem}[theorem]{Problem}

%=======================================================================
\begin{document}

\title{Induced Ramsey-type results and binary predicates for point sets\thanks{The research for this article was partially carried out in the course of the bilateral research project ``Erd\H{o}s--Szekeres type questions for point sets'' between Graz and Prague, supported by the OEAD project CZ~18/2015 and project no.\ 7AMB15A~T023 of the Ministry of Education of the Czech Republic.
The first two authors were partially supported by the project CE-ITI no.\ P202/12/G061 of the Czech Science Foundation (GA\v CR).
The first author was also partially supported by the grant GAUK 690214 and has received funding from European Research Council (ERC) under the European Union's Horizon 2020 research and innovation programme under grant agreement No. 678765.
The third author is Directeur de Recherches du F.R.S.-FNRS.
The fourth author is supported by a Schr\"odinger fellowship, Austrian Science Fund (FWF): J-3847-N35.}
}

\author{Martin Balko\inst{1,2} 
\and
Jan Kyn\v{c}l\inst{1} 
\and Stefan Langerman\inst{3} 
\and
Alexander Pilz\inst{4} 
}

\maketitle

\begin{center}
{\footnotesize
\inst{1} 
Department of Applied Mathematics and Institute for Theoretical Computer Science, \\
Faculty of Mathematics and Physics, Charles University, Czech Republic \\
\texttt{balko@kam.mff.cuni.cz, kyncl@kam.mff.cuni.cz}
\\\ \\
\inst{2} 
Department of Computer Science, Faculty of Natural Sciences, Ben-Gurion University
of the Negev, Beer~Sheva, Israel
\\\ \\
\inst{3}
D\'{e}partement d'Informatique Universit\'{e} Libre de Bruxelles, Brussels, Belgium\\
\texttt{stefan.langerman@ulb.ac.be}
\\ \ \\
\inst{4}
Department of Computer Science, ETH Z\"{u}rich, Zurich, Switzerland \\
\texttt{alexander.pilz@inf.ethz.ch}
}
\end{center}

\begin{abstract}
Let $k$ and $p$ be positive integers and let $Q$ be a finite point set in general position in the plane.
We say that $Q$ is \emph{$(k,p)$-Ramsey} if there is a finite point set $P$ such that for every $k$-coloring $c$ of $\binom{P}{p}$ there is a subset $Q'$ of $P$ such that $Q'$ and $Q$ have the same order type and $\binom{Q'}{p}$ is monochromatic in $c$.
Ne\v{s}et\v{r}il and Valtr proved that for every $k \in \mathbb{N}$, all point sets are $(k,1)$-Ramsey. They also proved that for every $k \ge 2$ and $p \ge 2$, there are point sets that are not $(k,p)$-Ramsey.

As our main result, we introduce a new family of $(k,2)$-Ramsey point sets, extending a result of Ne\v{s}et\v{r}il and Valtr.
We then use this new result to show that for every $k$ there is a point set $P$ such that no function $\Gamma$ that maps ordered pairs of distinct points from $P$ to a set of size $k$ can satisfy the following ``local consistency'' property: if $\Gamma$ attains the same values on two ordered triples of points from $P$, then these triples have the same orientation.
Intuitively, this implies that there cannot be such a function that is defined locally and determines the orientation of point triples.
\end{abstract}

%========================================================================
\section{Introduction}
\label{sec-introduction}

In this paper, we study induced Ramsey-type results for point sets and we present an application of our results to the problem of encoding sets of points with binary functions.

Let $k$ be a positive integer and let $X$ be a set, not necessarily finite.
A \emph{$k$-coloring} of~$X$ is a function $c \colon X \to C$ where $C$ is a set of size $k$.
We call the elements of $C$ \emph{colors} and we say that a subset $Y$ of $X$ is \emph{monochromatic in $c$} if all the elements of $Y$ have the same color in $c$.
Let $p$ be a positive integer. 
We use $\binom{X}{p}$ to denote the set of all $p$-element subsets (equivalently, unordered $p$-tuples of distinct elements) of $X$ and $(X)_p$ to denote the set of all ordered $p$-tuples of distinct elements of $X$. 
We use $[p]$ to denote the set $\{1,2,\dots,p\}$.

Let $P$ and $Q$ be two finite sets of points in the plane in \emph{general position};
that is, neither of these sets contains three points on a common line.
The \emph{order-type function of $P$} is the function $\Delta_P \colon (P)_3 \to \{-1,1\}$ where $\Delta_P(a,b,c)=1$ if the triple $(a,b,c)$ traced in this order is oriented counterclockwise and $\Delta_P(a,b,c)=-1$ otherwise.
By an \emph{order type} we mean an equivalence class of point sets under the following notion of isomorphism.
We say that $P$ and $Q$ \emph{have the same order type} if there is a one-to-one correspondence $f \colon P \to Q$ such that every ordered triple of points of $P$ has the same orientation (either clockwise or counterclockwise) as its image via~$f$.
A point set is in \emph{convex position} if its points are vertices of a convex polygon.
For two points $u$ and $v$ in the plane, we use $\overline{uv}$ to denote the line determined by $u$ and $v$ directed from $u$ to $v$.
We let $x(u)$ be the $x$\nobreakdash-coordinate of a point $u \in \mathbb{R}^2$.

Unless stated otherwise, we assume that every considered set $P$ of points is planar, finite, in general position, and that the $x$\nobreakdash-coordinates of points from $P$ are distinct.

Let $k$ and $p$ be positive integers and let $P$ and $Q$ be two point sets.
We use the standard arrow notation $P \to (Q)^p_k$ to abbreviate the following statement:
for every $k$-coloring $c$ of $\binom{P}{p}$ there is a subset $Q'$ of $P$ such that $Q'$ and $Q$ have the same order type and $\binom{Q'}{p}$ is monochromatic in $c$.
If there is a point set $P$ such that $P \to (Q)^p_k$, then we say that $Q$ is \emph{$(k,p)$-Ramsey}.

We focus on the problem of characterizing $(k,2)$-Ramsey point sets, which is currently wide open. For $p=1$ and $p\ge 3$, $(k,p)$-Ramsey sets have been completely characterized; see Subsections~\ref{subsec-introRamsey} and~\ref{subsec-finalRamseyOrderType}.

As our main result, we introduce a new family of $(k,2)$-Ramsey point sets.
We then use this result to show that for every $k$ there is a point set $P$ such that no function $\Gamma$ that maps ordered pairs of distinct points from $P$ to a set of size $k$ can satisfy the following ``local consistency'' property: if $\Gamma$ attains the same values on two ordered triples of points from $P$, then these triples have the same orientation.

%-------------------------------------------------------------------------
\subsection{Induced Ramsey-type results for point sets}
\label{subsec-introRamsey}

The problem of determining which point sets are $(k,p)$-Ramsey has already been considered in the literature~\cite{nesSolVal99,nesetrilValtr94,nesetrilValtr98}.
Ne\v{s}et\v{r}il and Valtr~\cite{nesetrilValtr94} showed the following result.

\begin{theorem}[{\cite[Theorem~3]{nesetrilValtr94}}]
\label{thm-nesValtr}
Let $Q$ be a finite set of points in the plane and let $k \ge 2$ be an integer.
There is a finite set $P=P(Q,k)$ of points in the plane such that for every $k$-coloring $c$ of points from~$P$ there is a subset $Q'$ of $P$ with the following three properties:
\begin{enumerate}[label=(\roman*)]
\item\label{item-nesVal1}$Q'$ is monochromatic in $c$,
\item\label{item-nesVal2}$Q$ and $Q'$ have the same order type, and
\item\label{item-nesVal3}the convex hull of $Q'$ does not contain any points from $P \setminus Q'$.
\end{enumerate}
\end{theorem}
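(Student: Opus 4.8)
The plan is to reduce the statement to a classical colouring theorem --- the multidimensional van der Waerden (Gallai--Witt) theorem, or equivalently the Hales--Jewett theorem --- applied to a suitably ``stretched'' configuration, and then to reinforce the construction so that the monochromatic copy we extract is additionally a hole, i.e.\ satisfies~\ref{item-nesVal3}. Since the order type of a finite point set is defined by strict sign conditions, I would first perturb $Q$ into a combinatorially equivalent set with rational, and after rescaling integer, coordinates; write $Q=\{q_1,\dots,q_n\}\subseteq\mathbb Z^2$. The key initial observation is that every \emph{positive} homothet $v+\lambda Q$ with $v\in\mathbb R^2$ and $\lambda>0$ has the same order type as $Q$; hence it suffices to produce, inside a finite set $P$, a monochromatic positive homothet of $Q$ that happens to be a hole. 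This already disposes of property~\ref{item-nesVal2}.

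For properties~\ref{item-nesVal1} and~\ref{item-nesVal2} alone, I would take a large constant $M$ and, for a parameter $m$, let the ground set be the $m$-fold lexicographic product $P_0=Q^{\otimes m}$, whose points are indexed by strings $w=w_1\cdots w_m\in[n]^m$ via $w\mapsto\sum_{j=1}^m M^{-j}q_{w_j}$. A short computation shows that the image of a combinatorial line (fix the coordinates outside a nonempty moving set $S$, and let the coordinates in $S$ run through $r\in[n]$) is exactly the positive homothet $v+\lambda Q$ with $\lambda=\sum_{j\in S}M^{-j}>0$, hence an order-type copy of $Q$. Thus, with $m$ equal to the Hales--Jewett number $\mathrm{HJ}(n,k)$, every $k$-colouring of $P_0$ contains a monochromatic combinatorial line, i.e.\ a monochromatic order-type copy of $Q$; equivalently one can phrase this via a monochromatic homothet of $Q$ inside a large integer box using the Gallai--Witt theorem. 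This reproves that $Q$ is $(k,1)$-Ramsey.

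The hard part will be property~\ref{item-nesVal3}: the copy $Q'=v+\lambda Q$ just produced is in general \emph{not} a hole, because whenever $Q$ has a point $q_{i_0}$ in the interior of $\mathrm{conv}(Q)$, the dense product structure places many points of $P_0$ (those sharing the prefix of $Q'$ and carrying $i_0$ in the ``copy'' coordinate) inside $\mathrm{conv}(Q')$. To fix this I would pass from $P_0$ to a thinned and geometrically spread-out variant: recursively, at every scale the $n$ sub-clusters are placed so that choosing one point from each yields a copy of $Q$ whose convex hull stays inside the parent cluster's region, and the parent regions are arranged along a strongly convex arc so that each one is exposed from outside; this guarantees that a copy living inside a single leaf cluster has empty convex hull with respect to everything outside that cluster, and the thinning (roughly, forbidding the strings that would create obstructing interior points) makes it a hole inside the cluster as well, while retaining enough combinatorial structure --- a combinatorial subspace, or a lattice of homothets --- for the Hales--Jewett or Gallai--Witt theorem to still force a monochromatic copy. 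A final generic perturbation, harmless because all relevant orientations are strict, puts $P$ in general position. The main obstacle is precisely this tension: the product construction that makes the Ramsey step work is exactly the kind of dense configuration in which no small copy can be a hole, so the delicate point is to thin and spread $P$ enough to expose the target copy without destroying the colouring-theoretic structure that forces it to exist.
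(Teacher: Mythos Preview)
This theorem is not proved in the present paper; it is quoted from Ne\v{s}et\v{r}il and Valtr~\cite{nesetrilValtr94}. What the paper \emph{does} prove is the weaker statement without the hole condition~\ref{item-nesVal3}, namely Lemma~\ref{lem-signatures}, and the argument there is different from yours: instead of Hales--Jewett or Gallai--Witt, one takes the nested product $Q_1\circ\cdots\circ Q_k$ (each point of $Q_1$ replaced by a tiny copy of $Q_2\circ\cdots\circ Q_k$) and argues by a two-line induction on $k$: either every top-level cluster contains a point of colour~$1$, giving a monochromatic copy of $Q_1$, or some cluster avoids colour~$1$ and we recurse into it. Your Hales--Jewett reduction is a perfectly valid alternative for parts~\ref{item-nesVal1}--\ref{item-nesVal2}; it is heavier machinery, but it has the advantage that the monochromatic copy is an honest positive homothet rather than a transversal of clusters, which is a better starting point if one hopes to control the convex hull. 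The paper's nested construction, by contrast, is hopeless for~\ref{item-nesVal3}: the transversal copy of $Q_1$ has entire clusters sitting inside its convex hull.

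Where your proposal has a genuine gap is exactly the part you flag yourself. For~\ref{item-nesVal3} you suggest ``thinning'' the product $Q^{\otimes m}$ and placing sub-clusters along a convex arc so that a monochromatic combinatorial line becomes a hole, but you do not specify which strings are removed, nor argue that what remains still contains a combinatorial line in every $k$-colouring. The obstruction you identify is real: in $Q^{\otimes m}$, if $q_{i_0}$ is interior to $\mathrm{conv}(Q)$, then for any combinatorial line with moving set $S$ and any $j\in S$, every string agreeing with the line off coordinate $j$ and carrying $i_0$ at $j$ lies strictly inside the convex hull of the line's image. Deleting all such strings destroys the full Hales--Jewett cube structure, and Hales--Jewett does not survive passage to an arbitrary subset. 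Nor does your convex-arc placement help: it separates clusters from one another, but the offending points live in the \emph{same} cluster hierarchy as the copy, at intermediate scales. So as written this is a plan that correctly isolates the difficulty but does not overcome it; a complete proof of~\ref{item-nesVal3} needs an additional idea (in~\cite{nesetrilValtr94} this is handled by a more careful recursive construction), and you should either supply that idea or cite the original.
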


Clearly, for all positive integers $k$ and $p$, every point set is $(1,p)$-Ramsey and every set with at most $p$ points is $(k,p)$-Ramsey.
Theorem~\ref{thm-nesValtr} implies that every point set is $(k,1)$-Ramsey for every integer $k \ge 2$.
In fact, Theorem~\ref{thm-nesValtr} is even stronger because of part~\ref{item-nesVal3}.

Ne\v{s}et\v{r}il and Valtr~\cite{nesetrilValtr94,nesetrilValtr98} also proved that for all $k \ge 2$ and $p \ge 2$ there are point sets that are not $(k,p)$-Ramsey.
In particular, they showed that for every integer $p \ge 2$ there exists a point set $Q=Q(p)$ and a 2-coloring $c$ of $\binom{\mathbb{R}^2}{p}$ such that no subset $R\subset \mathbb{R}^2$ with monochromatic $\binom{R}{p}$ in $c$ has the same order type as $Q$~\cite[Theorem~5]{nesetrilValtr94}.

On the other hand, some point sets are $(k,2)$-Ramsey for $k \ge 2$.
Ne\v{s}et\v{r}il and Valtr~\cite{nesetrilValtr94} showed that every set of four points not in convex position is $(k,2)$-Ramsey for every positive integer $k$~\cite[Theorem~6]{nesetrilValtr94}. 

The problem of determining whether a point set is $(k,2)$-Ramsey has the following equivalent formulation.
For a given point set $Q$ and $k \in \mathbb{N}$, is there a point set $P=P(Q,k)$ such that in every $k$-coloring of the edges of the complete geometric graph $K_P$ on $P$ there is a monochromatic complete subgraph of $K_P$ such that its vertex set has the same order type as $Q$?

%-----------------------------------------------------------------------
\subsection{Predicates for order types}
\label{subsec-introPredicate}

Let $\mathcal{P}$ be the family of all finite planar point sets in general position with distinct $x$\nobreakdash-coordinates and let $Z$ be some  finite set.
For $t \in \mathbb{N}$, a function $\Gamma \colon \{(P,T): P \in \mathcal{P}, T \in (P)_t \} \to Z$ is a \emph{$t$-ary point-set predicate} with \emph{codomain} $Z$.
For $P \in \mathcal{P}$, we let $\Gamma_P \colon (P)_t \to Z$ be the restriction of $\Gamma$ to $P$; more precisely, we define $\Gamma_P(T)$ as $\Gamma(P,T)$.
In the cases $t=2$ and $t=3$ we just say that $\Gamma$ is a \emph{binary} and \emph{ternary} point-set predicate, respectively.
We sometimes also shorten the term ``point-set predicate'' to ``predicate''.

An example of a ternary point-set predicate is the function $\Delta$ whose restriction to every $P$ from $\mathcal{P}$ is the order-type function $\Delta_P$.
Note that if $Q \subseteq P$, then $\Delta_Q = \Delta_P \restriction (Q)_3$ and thus $\Delta$ can be considered as a mapping from $(\mathbb{R}^2)_3$ to $\{-1,1\}$.
However, this might not be the case for all point-set predicates, since their definition is quite general; see Section~\ref{sec-FelsnerPredicate} for an example.

Clearly, the predicate $\Delta$ gives the upper bound $2^{O(n^3)}$ on the number of different order types of sets with $n$ points.
This bound is far from being tight as it is well-known that the number of different order types of sets with $n$ points is in $2^{\Theta(n\log{n})}$~\cite{alon86,goodmanPollack86}.

Considering \emph{generalized point sets}, where each pair of points lies on an $x$-monotone curve such that these curves form an arrangement of pseudolines (see~\cite{felsner97} for definitions), one can define the \emph{order type} of a generalized point set (sometimes called an \emph{abstract order type}) analogously as for point sets.
We note that order types of generalized point sets are relabeling classes of \emph{uniform acyclic oriented matroids of rank 3}.
The number of different order types of generalized point sets with $n$ points is in $2^{\Theta(n^2)}$~\cite{goodmanPollack83,knuth92}.

These upper bounds suggest the question whether the order type of a point set or a generalized point set can be encoded by a binary predicate.
Let $\mathcal{C}\subseteq \mathcal{P}$ be a class of point sets.
We say that a $t$-ary predicate $\Gamma$ \emph{encodes the order types} of sets from $\mathcal{C}$ if whenever there is a one-to-one correspondence $f \colon P \to Q$ between two sets from $\mathcal{C}$ such that $\Gamma_P(p_1,\dots,p_t)=\Gamma_Q(f(p_1),\dots,f(p_t))$ for every $t$-tuple $(p_1,\dots,p_t) \in (P)_t$, then $\Delta_P(a,b,c)=\Delta_Q(f(a),f(b),f(c))$ for every triple $(a,b,c)\in (P)_3$.

It is indeed possible to devise such a predicate: one can show its existence with a probabilistic argument, using the fact that the number of automorphism-free functions $f \colon ([n])_2 \to [2]$ exceeds the number of order types of $n$-point sets.
Moreover, in Section~\ref{sec-FelsnerPredicate}, we use a result of Felsner~\cite{felsner97} to explicitly construct a binary predicate $\Psi$ with codomain $\{0,1\}$ that encodes the order types of all point sets from $\mathcal{P}$.
Of course, the predicate $\Psi$ has a certain drawback.
Unlike the predicate $\Delta$, the predicate $\Psi$ does not behave well ``locally''.
In particular, we would like to keep the property that if $\Psi_P$ attains the same values on two ordered triples of points from $P$, then these triples have the same orientation.
The predicate $\Psi$ does not fulfill this property.

To capture the demand on local behavior of a binary predicate $\Gamma$, we introduce the following definition.
We say that $\Gamma$ is \emph{locally consistent on a set $P\in \mathcal{P}$} if, for any two distinct subsets $\{a_1,a_2,a_3\}$ and $\{b_1,b_2,b_3\}$ of $P$, having $\Gamma_P(a_i,a_j)=\Gamma_P(b_i,b_j)$ for all distinct $i$ and $j$ from $\{1,2,3\}$ implies $\Delta_P(a_1,a_2,a_3)=\Delta_P(b_1,b_2,b_3)$.
If a binary predicate $\Gamma$ is locally consistent on all sets from a class $\mathcal{C} \subseteq \mathcal{P}$, then we say $\Gamma$ is \emph{locally consistent on~$\mathcal{C}$}.
If $\mathcal{C}=\mathcal{P}$, then we just say that $\Gamma$ is \emph{locally consistent}.

The following question was the main motivation for our research.

\begin{problem}
\label{prob-binaryHereditaryPredicate}
Is there a locally consistent binary predicate that encodes order types of all sets from~$\mathcal{P}$?
\end{problem}

\subsubsection{Known point-set predicates}

Several predicates that encode order types of all sets from $\mathcal{P}$ are known, but none of them is binary and locally consistent. The predicate $\Delta$ is the ``default'' such predicate.
Similar predicates naturally occur in the investigation of combinatorial properties of point sets, and can be obtained from various combinatorial structures.
Several of these predicates have interesting applications.
For example, Felsner~\cite{felsner97} shows how to encode the order type in an $n \times n$ $\{0,1\}$-matrix and uses the resulting binary predicate to estimate the number of arrangements of $n$ pseudolines.
Here, we give examples of other point-set predicates that have been studied in the past.

The partial order of the slopes of the lines spanned by all pairs of points of a point set $P$ determines the order type of $P$ and also the \emph{circular sequence of permutations} of $P$, which are obtained by projecting $P$ to a line in all possible directions~\cite{semispaces}. The order of the slopes of the lines can be obtained from a 4-ary predicate $\Xi$ with $\Xi(a,b,c,d) = 1$ if the supporting line of $ab$ has smaller slope than the one of $cd$, and $\Xi(a,b,c,d) = -1$ otherwise.

Goodman and Pollack~\cite{semispaces} showed that up to the mirror symmetry, the order type of~$P$ is determined by the family of all intersections of $P$ with halfplanes.
A 4-ary predicate with codomain $\{-1,1\}$ indicating whether two points are on the same side of a line defined by two other points provides this information.

For some known predicates, additional information on the extreme points is required.
There is a predicate similar to but less powerful than the one identifying the semispaces of~$P$ that is implied by the work of Adaricheva and Wild~\cite{adarichevaWild10} on convex geometries; in our terminology, given the convex hull of~$P$, knowing whether a point of $P$ is inside the convex hull of three others determines the order type of~$P$. 
Aichholzer et al.~\cite{acklv14} show that the radial order in which the points of $P \setminus \{p\}$ appear around each point~$p \in P$ determines the order type of $P$ if it has at least four extreme points, or if the extreme points are known.
(There are point sets with triangular convex hull and different order types but the same radial orders at corresponding points.)
A 4-ary predicate with codomain $\{-1, 1\}$ can provide these radial orders.
They are also determined by the set of crossing edge pairs in the complete geometric graph on~$P$~\cite{kyncl_realizability}, giving yet another 4-ary predicate.

While the reader may easily come up with further ternary or 4-ary predicates that encode order types of all point sets from $\mathcal{P}$, we are interested in the existence of binary point-set predicates that are locally consistent and that encode order types of all point sets, since encoding order types with triple-orientations seems highly inefficient with respect to the amount of space needed.

%========================================================================
\section{Our results}
\label{sec-results}

For a line $\overline{uv}$ with $x(u) < x(v)$ and a point $w \in \mathbb{R}^2$, we say that $w$ is \emph{above} $\overline{uv}$ if $(u,v,w)$ is oriented counterclockwise.
Similarly, $w$ is  \emph{below} $\overline{uv}$ if $(u,v,w)$ is oriented clockwise.
Let $A=\{a_1,\dots,a_{|A|}\}$ and $B=\{b_1,\dots,b_{|B|}\}$ be two point sets with  $x(a_1) <  \cdots < x(a_{|A|})$ and $x(b_1) < \cdots < x(b_{|B|})$.
We say that $A$ lies \emph{deep below} $B$ if every point from $B$ lies above every line $\overline{a_ia_j}$ with $i<j$ and every point from $A$ lies below every line $\overline{b_ib_j}$ with $i<j$.
If $x(a_{|A|}) < x(b_1)$, then we write $x(A) < x(B)$.

We say that a point set $P$ is \emph{decomposable} if either $|P|=1$ or there is a partition $P_1 \cup P_2$ of $P$ that satisfies the following conditions:
\begin{enumerate}[label=(\roman*)]
\item\label{item-decomposable1} both point sets $P_1$ and $P_2$ are nonempty and $x(P_1)<x(P_2)$,
\item\label{item-decomposable3} $P_1$ is deep below $P_2$, and
\item\label{item-decomposable2} both point sets $P_1$ and $P_2$ are decomposable.
\end{enumerate}
The class of decomposable sets includes, for example, sets constructed by Erd\H{o}s and Szekeres~\cite{erdosSzekeres1935} in their proof of the lower bound in the Erd\H{o}s--Szekeres Theorem.

If the partition $P_1 \cup P_2$ of $P$ satisfies conditions~\ref{item-decomposable1} and \ref{item-decomposable3} (and not necessarily condition~\ref{item-decomposable2}), then we say that $P_1 \cup P_2$ is a \emph{splitting} of $P$.

First, we extend the result of Ne\v{s}et\v{r}il and Valtr~\cite[Theorem~6]{nesetrilValtr94} as follows.

\begin{theorem}
\label{thm-decomposableRamsey}
For every positive integer $k$, every decomposable set is $(k,2)$-Ramsey.
\end{theorem}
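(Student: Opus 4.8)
The plan is to prove the theorem by induction on $|Q|$, driven by the recursive structure of decomposable sets, and to reduce the inductive step to a single amalgamation of the kind used in partite constructions. It is convenient to strengthen the statement to: for every decomposable $Q$ and every $k$ there is a \emph{decomposable} point set $P$ with $P\to(Q)^2_k$; the extra conclusion is needed because the recursion has to stack copies of previously constructed witnesses ``deep below'' one another, and only decomposable witnesses can be stacked this way while staying in general position. The base case $|Q|\le 2$ is trivial with $P=Q$. For the inductive step, fix a splitting $Q=Q_1\cup Q_2$ witnessing decomposability, so $Q_1$ lies deep below $Q_2$, $x(Q_1)<x(Q_2)$, and $Q_1,Q_2$ are decomposable with fewer points; by the induction hypothesis there are decomposable $P_1\to(Q_1)^2_{k'}$ and $P_2\to(Q_2)^2_{k'}$ for a suitably large $k'$ depending on $k$ and $|Q|$.

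Next I would assemble $P$ as a ``$Q$-partite'' decomposable point set built from shrunken copies of $P_1$ and $P_2$. Concretely, $P$ is a union of clusters indexed by the points of $Q$: near each $q\in Q_1$ sits a tiny rescaled copy of $P_1$, near each $q\in Q_2$ a tiny rescaled copy of $P_2$, all clusters small and generic enough that every transversal (one point per cluster) of the corresponding sub-copies has order type $Q$, and the whole configuration is arranged so that it is again decomposable with the $Q_1$-clusters deep below the $Q_2$-clusters. A monochromatic copy of $Q$ then amounts to: a monochromatic $Q_1$-copy inside one left piece, a monochromatic $Q_2$-copy inside one right piece, and all $|Q_1|\cdot|Q_2|$ edges between them of one and the same colour, equal to the two internal colours. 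The internal edges are taken care of by the Ramsey properties of $P_1$ and $P_2$; the edges \emph{between} the two sides are handled by a partite-style amalgamation that processes the cross-edge ``layers'' one by one. For each layer one replaces the current system by many glued copies of it, identified along a common lower part, and applies Ramsey's theorem for pairs to the edges of that layer; keeping track of how many transversal copies survive as ``good'', after all layers have been processed one transversal remains whose every edge has received the same colour, and that transversal is the desired induced monochromatic copy of $Q$. Choosing $k'$ large enough, via a product-colouring bookkeeping, is what forces the internal and cross colours to coincide.

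The combinatorial skeleton here is the usual partite construction, so the real content is geometric: one must check that every amalgamation step can be realized by an \emph{actual} planar point set, in general position, with distinct $x$-coordinates, still $Q$-partite with every transversal of order type $Q$, and with the ``deep below'' relations between the relevant clusters preserved so that the partite structure is not destroyed. This is precisely where decomposability of $Q$ is used, and where I expect most of the work to lie: the recursive deep-below structure of a decomposable set supplies enough vertical and horizontal room to place many shrunken decomposable copies above and to the right of one another and to glue them along prescribed sub-configurations without ever creating three collinear points or violating the required orientations. Some such structural hypothesis is unavoidable, since, as recalled in the introduction, not all point sets are $(k,2)$-Ramsey.

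The main obstacle, then, is not the Ramsey counting but the geometric realization and the bookkeeping that goes with it: propagating exact control of all triple-orientations through a tower of amalgamations, and verifying at the very end that the monochromatic transversal one extracts really has order type $Q$ rather than merely ``some transversal order type''. I would organize the argument so that a single lemma carries out one amalgamation step for decomposable partite systems, state precisely which deep-below invariants it preserves, and then iterate it; managing the interplay between the internal Ramsey properties of $P_1$ and $P_2$ and the cross-layer amalgamations, so that all the colours end up equal, is the last delicate point.
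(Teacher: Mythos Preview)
Your overall plan—induct along the recursive splitting of a decomposable $Q$, place a Ramsey witness $P_1$ for $Q_1$ deep below a Ramsey witness $P_2$ for $Q_2$, and then homogenize the cross-edges—is the right skeleton, and the geometric realizability you worry about is in fact the easy part for decomposable sets. The genuine gap is the step you wave away as ``product-colouring bookkeeping'': after you homogenize the bipartite edges to some colour $j$ and then find a monochromatic $Q_1$-copy of colour $a$ inside the left witness and a monochromatic $Q_2$-copy of colour $b$ inside the right witness, nothing you have set up relates $a$, $b$, and $j$. Enlarging $k'$ does not help: a product colouring on \emph{pairs} in $P_1$ cannot encode cross-edge information, which is attached to single vertices and to points of $P_2$ that have not yet been selected; and the layer-by-layer amalgamation you sketch only makes each cross layer monochromatic, not all of them in the same colour, let alone equal to the two internal colours. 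Your description is also internally inconsistent—you index clusters by the points of $Q$ and speak of transversals of order type $Q$, but then locate the monochromatic $Q_1$-copy inside a \emph{single} left cluster—so it is unclear what object the amalgamation is meant to act on.

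The paper resolves the colour-synchronization problem by strengthening the induction to an \emph{off-diagonal} statement: for decomposable $Q_1,\dots,Q_k$ there is $P$ with $P\xrightarrow{x}(Q_1,\dots,Q_k)^2$, and one inducts on $|Q_1|+\cdots+|Q_k|$, splitting only one $Q_i$ at a time. On the left one places, for each $i$, a witness $T_i$ for the list $(Q_1,\dots,Q_{i-1},Q_i^1,Q_{i+1},\dots,Q_k)$; on the right, $U_i$ for the list with $Q_i^2$ in the $i$th slot. A single, simple bipartite lemma (two applications of the $(k,1)$-Ramsey property of Lemma~\ref{lem-signatures}, as in Lemma~\ref{lem-nesValtr}—no partite amalgamation is needed) homogenizes the cross-edges to some colour $j$. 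Now the off-diagonal hypothesis does the work: if in some $T_i$ the monochromatic copy appears in a colour $\ell\neq i$, it already has the full signature $Q_\ell$ and you are done; otherwise for every $i$ the copy in $T_i$ is in colour $i$ with signature $Q_i^1$, and symmetrically in $U_i$ with signature $Q_i^2$. In particular, for $i=j$ the two halves $Q_j^1$ and $Q_j^2$ are both in colour $j$ and lie on opposite sides of a splitting, so their union is the monochromatic copy of $Q_j$. This off-diagonal trick is exactly the missing idea; once you adopt it, both the combinatorics and the geometry become short.
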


We further show that, for $k \ge 2$ and $p \ge 3$, $(k,p)$-Ramsey sets are exactly point sets in convex position (Proposition~\ref{prop-RamseyCharacterization}).
We also present a short proof of the fact that for any positive integer $k$ every point set is $(k,1)$-Ramsey (Lemma~\ref{lem-signatures}).
An interesting problem, inspired by a question of a reviewer, is whether there is a $(2,2)$-Ramsey point set that is not decomposable. 

Our study of $(k,p)$-Ramsey sets was motivated by questions about binary point-set predicates.
In particular, Problem~\ref{prob-binaryHereditaryPredicate} was our main motivation.

Using a result of Felsner~\cite{felsner97}, we find a binary point-set predicate $\Psi$ with codomain $\{0,1\}$ that encodes order types of all point sets from $\mathcal{P}$; see Section~\ref{sec-FelsnerPredicate}.
As already mentioned, the predicate $\Psi$ is not locally consistent. 
In fact, using Theorem~\ref{thm-decomposableRamsey}, we show that no binary point-set predicate is locally consistent.
This gives a negative solution to Problem~\ref{prob-binaryHereditaryPredicate}.

\begin{theorem}
\label{thm-noHereditaryPredicate}
For every finite set $Z$, there is a point set $P=P(|Z|)$ such that no binary point-set predicate with codomain $Z$ is locally consistent on~$P$. 
\end{theorem}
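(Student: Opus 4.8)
The plan is to reduce Theorem~\ref{thm-noHereditaryPredicate} to Theorem~\ref{thm-decomposableRamsey} via a suitable coloring argument. Fix a finite set $Z$ and set $k=|Z|$. The key observation is that a binary point-set predicate $\Gamma$ with codomain $Z$, when restricted to a fixed point set $P$, assigns to each ordered pair $(a,b) \in (P)_2$ a value in $Z$; equivalently, to each \emph{unordered} pair $\{a,b\} \in \binom{P}{2}$ it assigns the ordered pair $\bigl(\Gamma_P(a,b),\Gamma_P(b,a)\bigr) \in Z \times Z$. This is a $k^2$-coloring of $\binom{P}{2}$. The rough idea is that if $P$ is chosen so that $P \to (Q)^2_{k^2}$ for a cleverly chosen decomposable target set $Q$, then any $\Gamma$ yields a monochromatic copy $Q'$ of $Q$ in $P$; on such a $Q'$ the predicate $\Gamma$ carries essentially no information distinguishing triples, so if $Q$ is chosen to contain two triples of opposite orientation that look identical through $\Gamma$, local consistency fails.

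First I would pin down which target set $Q$ to use. On a monochromatic $Q'$ (monochromatic for the $k^2$-coloring above), all ordered pairs have the same value under $\Gamma_{Q'}$: there is a single $z\in Z$ with $\Gamma_{Q'}(a,b)=z$ for \emph{every} ordered pair of distinct points $a,b\in Q'$. Hence for \emph{any} two triples $\{a_1,a_2,a_3\}$ and $\{b_1,b_2,b_3\}$ of $Q'$ we have $\Gamma_{Q'}(a_i,a_j)=\Gamma_{Q'}(b_i,b_j)=z$ for all $i\ne j$. Thus local consistency on $Q'$ (and hence on $P$, since $Q'\subseteq P$ and $\Gamma_P$ restricts to $\Gamma_{Q'}$ — here I use that the predicate's value on a pair of points of $P$ does not depend on which subset we view the pair in, which needs a small remark) forces $\Delta_{Q'}(a_1,a_2,a_3)=\Delta_{Q'}(b_1,b_2,b_3)$ for all such triples. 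But a set on $\ge 4$ points in general position always has two triples of opposite orientation. So it suffices to take $Q$ to be any decomposable set on at least four points — for instance a decomposable set on four points, which exists (e.g. take $P_1$ a single point deep below $P_2$, where $P_2$ is a decomposable set of three points, itself split as a point deep below two points). Theorem~\ref{thm-decomposableRamsey} then supplies a point set $P$ with $P\to (Q)^2_{k^2}$, and we set $P(|Z|):=P$.

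Putting it together: given $\Gamma$ with codomain $Z$, define $c\colon\binom{P}{2}\to Z\times Z$ by $c(\{a,b\})=(\Gamma_P(a,b),\Gamma_P(b,a))$. Since $|Z\times Z|=k^2$ and $P\to (Q)^2_{k^2}$, there is $Q'\subseteq P$ with the order type of $Q$ and $\binom{Q'}{2}$ monochromatic in $c$; in particular $\Gamma_P$ is constant on $(Q')_2$. As $|Q'|=|Q|\ge 4$, pick two triples in $Q'$ with opposite orientations (these exist since $Q'$ is in general position and has at least four points); they witness that $\Gamma$ is not locally consistent on $P$, completing the proof.

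The main obstacle I anticipate is not the combinatorics but the bookkeeping around the phrase ``locally consistent on $P$'': one must check that the hypothesis of local consistency, which is phrased with $\Gamma_P$, transfers correctly to the subset $Q'$ — i.e.\ that $\Gamma_P$ restricted to ordered pairs within $Q'$ is what matters, and that the two chosen triples are genuinely \emph{distinct} subsets of $P$ as required in the definition of local consistency (they are, since they differ in at least one point when $|Q'|\ge 4$, but one should say a word to handle the degenerate-looking case where the two triples share two points). A second minor point is to be careful that the $k^2$-coloring is applied to \emph{unordered} pairs, matching the definition of $P\to (Q)^2_{k}$; encoding the ordered predicate values as a pair handles this cleanly. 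Everything else is immediate from Theorem~\ref{thm-decomposableRamsey}.
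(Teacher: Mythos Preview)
There is a real gap at the sentence ``all ordered pairs have the same value under $\Gamma_{Q'}$''. First, the coloring $c(\{a,b\})=(\Gamma_P(a,b),\Gamma_P(b,a))$ is not well defined on an unordered pair; you must fix which element comes first, say the one with smaller $x$-coordinate. Once you do that, monochromaticity of $Q'$ only gives fixed $z_1,z_2\in Z$ with $\Gamma_P(a,b)=z_1$ and $\Gamma_P(b,a)=z_2$ whenever $x(a)<x(b)$; it does \emph{not} force $\Gamma_P$ to be constant on $(Q')_2$. If $z_1\ne z_2$, two ordered triples $(a_1,a_2,a_3)$ and $(b_1,b_2,b_3)$ agree under $\Gamma_P$ only when they have the same relative $x$-order, so to violate local consistency you need $Q'$ to contain two $x$-increasing triples of opposite orientation, i.e., $Q'$ must not be a cup or a cap. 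But every decomposable set of size four \emph{is} a cup: for each of the three possible splittings one checks that all $x$-increasing triples are counterclockwise. Hence your proposed four-point target cannot work. If instead you use the decomposable five-point set with triangular convex hull from Observation~\ref{obs-decomposable5Tuples}, the repaired version of your argument does go through (any $Q'$ with that order type is not in convex position, hence not a cup or cap), and is in fact a bit shorter than the paper's proof.

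The paper proceeds differently: it colors unordered pairs by the multiset $\{\Gamma_P(a,b),\Gamma_P(b,a)\}$, using $K=\binom{k+1}{2}$ colors, and records the remaining asymmetry as a partial orientation of the edges. On a monochromatic copy either no edge is oriented (your easy case), or every edge is oriented and $\Gamma_P$ encodes an arbitrary tournament rather than one tied to the $x$-order. This is why the paper needs the structural Lemmas~\ref{lem-predicateGraphColoring} and~\ref{lem-quadrupleOrientation}: applied to two overlapping four-point subsets of the five-point monochromatic copy, they impose incompatible constraints on the edge orientations (an interior point must be a source or sink in one subset, which prevents the outer triangle of the other subset from being a directed $3$-cycle), yielding the contradiction.
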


No binary predicate is locally consistent on all point sets, but there might be binary predicates that are locally consistent on more restricted classes of point sets and that encode order types of point sets from these classes.
As a first step in this direction, we find a binary predicate with codomain of size only 2 that is locally consistent on \emph{wheel sets}, that is, point sets $P$ with at least $|P|-1$ extremal points. 

Wheel sets have been studied, for example, in connection with combinatorially different simplicial polytopes with $n$ vertices in dimension $n-3$.
It follows from a result by Perles (see~\cite[Chapter~6.3]{gruenbaum_book}) that there are $\Theta(2^n/n)$ different order types of wheel sets of size $n$.
See~\cite{embracing} for further results on wheel sets and the historical background.

\begin{proposition}
\label{prop-hereditaryPredicate}
The order types of wheel sets can be encoded with a binary point-set predicate $\Phi$ with codomain $\{-1,1\}$ such that $\Phi$ is locally consistent on the class of all wheel sets.
\end{proposition}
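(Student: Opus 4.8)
The plan is to exhibit a binary point-set predicate $\Phi$ with codomain $\{-1,1\}$ together with a single \emph{decoding function} $g\colon\{-1,1\}^6\to\{-1,1\}$ such that, for every wheel set $P$ and every triple $a_1,a_2,a_3$ of distinct points of $P$,
\[
\Delta_P(a_1,a_2,a_3)=g\bigl(\Phi_P(a_1,a_2),\Phi_P(a_2,a_1),\Phi_P(a_1,a_3),\Phi_P(a_3,a_1),\Phi_P(a_2,a_3),\Phi_P(a_3,a_2)\bigr).
\]
Such a pair $(\Phi,g)$ settles the proposition at once. It gives local consistency on the class of wheel sets, since two triples of $P$ inducing the same sextuple of $\Phi$-values then have equal orientations. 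And it gives the encoding property: if $f\colon P\to Q$ is a bijection of wheel sets preserving $\Phi$ on every ordered pair, then the sextuples agree triple by triple, so $\Delta_P(a,b,c)=g(\cdots)=\Delta_Q(f(a),f(b),f(c))$. On point sets that are not wheel sets $\Phi$ may be set arbitrarily, so only $\Phi$ on wheel sets and the function $g$ need to be built.

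The engine is the three-term Grassmann--Plücker identity for the orientation predicate: for any four points $o,a_1,a_2,a_3$ in general position,
\[
\Delta_P(a_1,a_2,a_3)=\chi\cdot\Delta_P(o,a_1,a_2)\,\Delta_P(o,a_1,a_3)\,\Delta_P(o,a_2,a_3),
\]
where $\chi\in\{-1,1\}$ is the product of the four orientations of the triples of $\{o,a_1,a_2,a_3\}$; this $\chi$ is fully symmetric in the four points and equals $-1$ precisely when one of them lies inside the triangle of the other three. For a wheel set $P$ I would take as reference point $o=o(P)$ the unique non-extreme point of $P$ when $P$ is not in convex position, and the leftmost point of $P$ otherwise. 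Since every point of $P\setminus\{o\}$ is a vertex of $\operatorname{conv}(P)$, the four-point set $\{o,a_1,a_2,a_3\}$ fails to be in convex position only if $o$ itself lies inside $\triangle a_1a_2a_3$; and a short argument shows that this happens exactly when the three values $\Delta_P(o,a_1,a_2),\Delta_P(o,a_2,a_3),\Delta_P(o,a_3,a_1)$ are all equal, their common value then being $\Delta_P(a_1,a_2,a_3)$. (In the convex-position case $o$ is extreme, so this never occurs; only in the non-convex case, with $o$ the interior point, can $\chi=-1$ arise.) I then set $\Phi_P(u,v):=\Delta_P(o,u,v)$ for all $u,v\in P\setminus\{o\}$ and $\Phi_P(o,v):=\Phi_P(v,o):=1$ for $v\neq o$.

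The decoder $g$ reads a sextuple $(x_{12},x_{21},x_{13},x_{31},x_{23},x_{32})$ as follows. If some unordered pair $\{i,j\}$ has $x_{ij}=x_{ji}=1$ (a \emph{marked} pair) then one checks that exactly two pairs are marked and that they share a common index $k$; in that case output $\varepsilon_k\,x_{lm}$, where $\{l,m\}=\{1,2,3\}\setminus\{k\}$ (say $l<m$) and $\varepsilon_k\in\{-1,1\}$ is the fixed sign determined by $k$. Otherwise no pair is marked, and output $(-1)^{e}\,x_{12}x_{13}x_{23}$ with $e:=1$ when $x_{12}=x_{23}=-x_{13}$ and $e:=0$ otherwise. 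To verify the identity for a triple of a wheel set: if $o$ is among $a_1,a_2,a_3$, say $a_k=o$, then the two pairs through $a_k$ are marked while the remaining pair $\{a_l,a_m\}$ is antisymmetric (because $\Phi_P(a_l,a_m)=-\Phi_P(a_m,a_l)$), so $g$ returns $\varepsilon_k\,\Delta_P(o,a_l,a_m)=\Delta_P(a_1,a_2,a_3)$; if $o$ is not among them, all three pairs are antisymmetric, so $g$ enters the second branch, where $e=1$ iff $o\in\triangle a_1a_2a_3$ iff $\chi=-1$, and the Grassmann--Plücker identity gives $\Delta_P(a_1,a_2,a_3)=\chi\cdot x_{12}x_{13}x_{23}=(-1)^{e}x_{12}x_{13}x_{23}$.

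Most of the remaining work is routine bookkeeping around $o$: checking that the marker value $1$ is never produced by a pair of two non-reference points (so that ``marked'' pairs detect exactly the presence of $o$ in a triple, and always come in a pair), that the two branches of $g$ and the signs $\varepsilon_k$ are genuinely functions of the sextuple alone and respect the alternation of $\Delta_P$ under relabeling the triple, and the small cases $|P|\le 3$. The point I expect to be the real content is the choice of reference: one must use the \emph{interior} point as $o$ in the non-convex case, because this is exactly what makes the correction factor $\chi$ locally recoverable from the six bits through the ``all three equal'' test; with an extreme reference the correction never appears, and for non-convex wheel sets an uncorrected product simply does not compute the orientation.
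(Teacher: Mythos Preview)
Your approach is correct and follows essentially the same construction as the paper: take the reference point $o$ to be the unique interior point (or the leftmost point when $P$ is convex) and set $\Phi_P(u,v)=\Delta_P(o,u,v)$ on pairs not containing $o$. The difference is in how the pairs through $o$ are encoded: the paper uses the antisymmetric convention $\Phi_P(w_P,q)=-1$, $\Phi_P(q,w_P)=1$, whereas you use the symmetric marker $(1,1)$, which lets you detect $o$ locally in a triple and wrap the whole verification into a single decoder $g$. The paper instead checks local consistency and the encoding property by a direct case split on whether $w_P$ lies in one, both, or neither triple. The geometric core is identical: your ``$\chi$'' test via $x_{12}=x_{23}=-x_{13}$ is exactly the paper's observation that $\Delta_P(a_1,a_2,a_3)$ is determined by the sign of $\Phi_P(a_1,a_2)+\Phi_P(a_2,a_3)+\Phi_P(a_3,a_1)$ when $w_P\notin\{a_1,a_2,a_3\}$. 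One small tradeoff worth noting: the paper's $\Phi$ is fully antisymmetric, i.e., a tournament, and this extra feature is used immediately afterward to show that wheel sets are the \emph{only} sets admitting a locally consistent tournament predicate; your $\Phi$ is not antisymmetric and would not serve that secondary purpose, though it is perfectly adequate for the proposition as stated.
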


Since there are only $\Theta(2^n/n)$ different order types of wheel sets of size $n$, the binary predicate from Proposition~\ref{prop-hereditaryPredicate} is ``inefficient'' in a similar way that the order type function is inefficient in encoding order types of all point sets.

We also try to estimate the growth rate of the function $h \colon \mathbb{N} \to \mathbb{N}$ where $h(k)$ is the largest integer such that there is a binary predicate with codomain of size $k$ that is locally consistent on all point sets of size $h(k)$ and that encodes their order types.

By Theorem~\ref{thm-noHereditaryPredicate}, we know that $h(k)$ is finite for every $k$ and thus well-defined.
On the other hand, we show that $h(k) \ge \Omega(k^{3/2})$.

\begin{theorem}
\label{thm-smallSets}
For every positive integer $k$, there is a binary point-set predicate with codomain of size $k$ that is locally consistent on all point sets of size at most $ck^{3/2}$ for some constant $c>0$ and that encodes their order types.
\end{theorem}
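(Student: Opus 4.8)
The plan is to build the predicate directly from a counting argument, in the spirit of the probabilistic construction mentioned in Section~\ref{subsec-introPredicate}, but carried out quantitatively so as to handle all point sets of size $n := \lfloor ck^{3/2}\rfloor$ simultaneously. First I would fix an ambient ground set: think of every point set of size at most $n$ as labeled by $[n]$ (padding with dummy indices if it has fewer points), so that a binary predicate $\Gamma$ restricted to such a set is determined by a function $\gamma\colon ([n])_2 \to [k]$, i.e.\ an assignment of one of $k$ values to each of the $n(n-1)$ ordered pairs. There are $k^{n(n-1)}$ such functions. The predicate I want must satisfy two requirements on each $n$-point set $P$: (a) \emph{local consistency}, meaning no two triples with opposite orientation receive the same induced pattern of six values, and (b) \emph{encoding}, meaning that if two $n$-point sets induce the same $\gamma$ under some bijection then they have the same order type. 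I would show both hold for a uniformly random $\gamma$ with positive probability.

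For requirement (a), on a fixed $P$ a bad event is a pair of triples $\{a_1,a_2,a_3\}$, $\{b_1,b_2,b_3\}$ with $\Delta_P(a_1,a_2,a_3)\ne\Delta_P(b_1,b_2,b_3)$ but with matching induced $6$-tuples of $\gamma$-values on the $6$ ordered pairs inside each triple. For a random $\gamma$ the probability that two \emph{disjoint} triples get matching patterns is $k^{-6}$ (six independent pairs must agree), and there are at most $\binom{n}{3}^2 < n^6$ such pairs of triples, so by a union bound the failure probability on one set is $O(n^6 k^{-6})$; summing over all $k^{O(n)}$... no — summing over the at most $2^{\Theta(n^2)}$ distinct order types of $n$-point sets (or rather, over a representative for each, since local consistency only depends on the order type) the total is roughly $2^{\Theta(n^2)} n^6 k^{-6}$, which we need to be $<1$; taking $n = ck^{3/2}$ makes $2^{\Theta(n^2)} = 2^{\Theta(c^2 k^3)}$, which unfortunately \emph{beats} $k^{-6}$. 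So a crude union bound over all order types is too weak, and this is the main obstacle: I will need to be more careful, handling local consistency not by a global union bound but structurally. One route: design $\gamma$ so that the $6$-tuple pattern of any triple already records its orientation outright — e.g.\ reserve the last value $\gamma(a_i,a_j)$ along one edge to encode $\Delta_P$ — so local consistency is automatic by construction, and only the encoding property is left to the probabilistic/counting step.

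With local consistency secured by construction, requirement (b) is the genuine counting step. Here I would compare the number of ``useful'' functions $\gamma\colon([n])_2\to[k]$ — those that are locally consistent in the structural sense above and have no nontrivial automorphism-respecting collision — with the number of order types of $n$-point sets, which is $2^{\Theta(n\log n)}$ by \cite{alon86,goodmanPollack86}. The number of available $\gamma$ is $k^{\Theta(n^2)} = 2^{\Theta(n^2\log k)}$; as long as $n^2\log k$ dominates $n\log n$ plus the $\log$ of the number of bijections $[n]\to[n]$ (which is $O(n\log n)$), a simple pigeonhole/injection argument assigns a distinct $\gamma$ to each order type, and reading off which order type a given $\gamma$ was assigned to recovers $\Delta_P$. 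Every inequality here is comfortably satisfied for $n = ck^{3/2}$ — indeed for $n$ polynomial in $k$ — so the $k^{3/2}$ bound comes not from this step but from making the local-consistency encoding fit: the construction needs $\binom{n}{2}$-ish many ``slots'' to inject $\Theta(n)$ orientation bits per... The cleanest accounting that yields exactly $\Theta(k^{3/2})$ is: partition the $\binom{n}{2}$ pairs into blocks, use $\sqrt{k}$-many values per block to locally tag orientations and the remaining $\sqrt{k}$-many degrees of freedom to distinguish order types, and check that $n^2 \gg n\log n$ with this split forces $n = O(k^{3/2})$ to be the largest achievable size. I would finish by verifying the constant $c$ extracted from these inequalities is positive, which is routine once the blocks are laid out.
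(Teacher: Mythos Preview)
Your proposal has a genuine gap at the pivotal step. You correctly recognize that a union bound over all order types cannot work, but your proposed escape---``reserve the last value $\gamma(a_i,a_j)$ along one edge to encode $\Delta_P$''---is not realizable: the value $\gamma(a_i,a_j)$ is attached to an ordered \emph{pair}, and that pair lies in $n-2$ different triples, generally with both orientations present. So a pair-value cannot record the orientation of ``the'' triple containing it, and the sketch that follows (blocks, $\sqrt{k}$ values, etc.) never recovers from this; in particular it gives no honest reason for the exponent $3/2$.

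The paper's fix is different and cleaner. Instead of encoding orientation, it \emph{strengthens} local consistency to a purely combinatorial property of the function $f\colon([n])_2\to[k]$ that makes no reference to any point set: require that no two distinct unordered triples $\{a_1,a_2,a_3\}$ and $\{b_1,b_2,b_3\}$ satisfy $f(a_i,a_j)=f(b_i,b_j)$ for all $i\neq j$. Any such ``rigid'' $f$ is automatically locally consistent on every $P$, because the hypothesis of local consistency is never met. The existence of many rigid functions is then a statement about random $f$ alone, with no order types in sight. A union bound still fails (there are $\Theta(n^6)$ pairs of triples, each bad with probability $k^{-6}$, and $n^6/k^6\to\infty$ for $n\sim k^{3/2}$), so the paper invokes the Lov\'asz local lemma: a bad event for a fixed pair of triples depends only on events sharing an ordered pair, giving dependency degree $O(n^4)$ for disjoint-or-one-shared triples and $O(n^2)$ for two-shared triples (whose probability is $k^{-4}$). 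The LLL condition $k^{-6}\lesssim 1/n^4$ is exactly $k\gtrsim n^{2/3}$, which is where $n=\Theta(k^{3/2})$ comes from. The LLL also yields a lower bound of roughly $(2e)^{-n^2}$ on the probability of rigidity, so at least $k^{n(n-1)}/\bigl(n!\,(2e)^{n^2}\bigr)\ge 2^{c''n\log n}$ pairwise non-equivalent rigid functions exist---enough to assign a distinct one to each order type, after which encoding follows from rigidity (rigid functions have no nontrivial automorphism).
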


We prove Theorem~\ref{thm-decomposableRamsey} in Section~\ref{sec-decomposableRamsey}.
In Section~\ref{sec-FelsnerPredicate}, we give an example of a binary predicate that encodes order types of all point sets.
In Section~\ref{sec-hereditaryPredicate} we prove Proposition~\ref{prop-hereditaryPredicate} and also show that wheel sets are the only point sets with a locally consistent ``antisymmetric'' predicate with codomain $\{-1,1\}$.
Theorems~\ref{thm-noHereditaryPredicate} and~\ref{thm-smallSets} are  proved in Sections \ref{sec-noHereditaryPredicate} and~\ref{sec-smallSets}, respectively.
Finally, in Section~\ref{sec-final}, we discuss some open problems and possible directions for future research.

%========================================================================
\section{Proof of Theorem~\ref{thm-decomposableRamsey}}
\label{sec-decomposableRamsey}

Here we show that decomposable sets are $(k,2)$-Ramsey for every positive integer $k$.
That is, if $Q$ is a decomposable set, then there is a point set $P=P(Q,k)$ such that $P \to (Q)^2_k$.

We say that point sets $P$ and $Q$ \emph{have the same signature} if there is a one-to-one correspondence $f \colon P \to Q$ that preserves the triple-orientations and the total order of the $x$\nobreakdash-coordinates of the points.
Clearly, if $P$ and $Q$ have the same signature, then they have the same order type.
The converse is not true already for sets of three points, which have just one possible order type but two possible signatures.

Let $k$ be a positive integer and let $Q_1, \dots,Q_k$ be point sets.
For a point set $P$ and a positive integer $p$, we write $P \xrightarrow{{x}} (Q_1,\dots,Q_k)^p$ to denote the following statement: for every $k$-coloring $c$ of $\binom{P}{p}$ there is an $i \in [k]$ and a subset $Q'_i$ of $P$ that has the same signature as $Q_i$ and such that all $p$-tuples of points from $Q'_i$ have color $i$ in $c$.
If $Q_i$ and $Q$ have the same signature for every $i \in [k]$, then we write $P \xrightarrow{{x}} (Q)^p_k$.
Observe that if $P \xrightarrow{{x}} (Q)^p_k$, then $P \to (Q)^p_k$.
If there is a point set $P$ such that $P \xrightarrow{{x}} (Q)^p_k$, then we say that $Q$ is \emph{ordered $(k,p)$-Ramsey}. 

The following result implies that decomposable sets are ordered $(k,2)$-Ramsey.

\begin{theorem}
\label{thm-decomposableRamseyNondiagonal}
Let $k$ be a positive integer and let $Q_1, \dots,Q_k$ be decomposable point sets.
Then there is a point set $P=P(Q_1,\dots,Q_k)$ such that $P \xrightarrow{{x}} (Q_1,\dots,Q_k)^2$.
\end{theorem}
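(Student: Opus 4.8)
The plan is to proceed by induction on the total number of points in $Q_1, \dots, Q_k$, using the recursive structure of decomposable sets. The base case is when every $Q_i$ is a single point: then a set $P$ of $k$ points on a convex curve works trivially (or even one point, since coloring a single point forces a monochromatic copy of some $Q_i$). For the inductive step, pick some $Q_j$ with $|Q_j| \ge 2$ and let $Q_j = Q_j^L \cup Q_j^R$ be its decomposition witnessing $x(Q_j^L) < x(Q_j^R)$ and $Q_j^L$ deep below $Q_j^R$. The heart of the argument is a "product"/stepping-up construction: I would build $P$ so that it consists of a large collection of "clusters", where the global arrangement of clusters is itself a highly Ramsey point set, and inside each cluster sits a point set that is Ramsey for the two "halves" $Q_j^L$ and $Q_j^R$ together with all the other $Q_i$'s. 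Concretely, one applies the induction hypothesis twice: once to get a set $R$ with $R \xrightarrow{x} (Q_1, \dots, Q_k, Q_j^L)^2$ and similarly handle $Q_j^R$, and then replaces each point of an outer Ramsey set by a scaled, flattened copy of such a set placed "deep below" its neighbors so that the deep-below relation propagates.

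The key steps, in order, are: (1) set up the induction on $\sum_i |Q_i|$ and dispose of the base case; (2) formulate the right intermediate statement — I expect one needs a slightly stronger inductive hypothesis allowing an arbitrary list of decomposable target sets, which is exactly why the theorem is stated in the nondiagonal form $P \xrightarrow{x}(Q_1,\dots,Q_k)^2$ rather than just for a single $Q$; (3) given a coloring $c$ of $\binom{P}{2}$, classify pairs by whether both endpoints lie in the same cluster or in different clusters — the "same cluster" pairs are handled inside a cluster by induction, and the "different cluster" pairs induce an auxiliary coloring on the outer structure (for a pair of clusters $C, C'$ with $x(C) < x(C')$, one uses a product Ramsey / Hales–Jewett-type pigeonholing to find a color class that is consistent across many cluster-pairs, at the cost of passing to large sub-clusters); (4) combine: the outer Ramsey property yields clusters arranged like $Q_j^R$ with all cross-pairs one color $i$; if $i = j$ we then go inside the clusters to extract copies of $Q_j^L$ also in color $j$ and glue (the deep-below condition ensures $Q_j^L$-copy-in-a-cluster together with the cluster arrangement forms a signature copy of $Q_j = Q_j^L \cup Q_j^R$); if $i \ne j$ we instead directly find $Q_i$ inside a single cluster. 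One must be careful that "deep below" is preserved under the nesting and that taking subsets preserves signatures.

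The main obstacle I anticipate is step (3)–(4): controlling the colors of cross-cluster pairs. A pair of points in different clusters has a color depending on both clusters and on the positions within them, so a naive pigeonhole is not enough — one needs a product Ramsey argument (iterated Hales–Jewett, or the Nešetřil–Valtr machinery behind Theorem~\ref{thm-nesValtr}) to find sub-clusters $C_1, \dots, C_m$ and representative behavior so that the color of a cross pair depends only on which clusters it joins, after which the outer set must itself be chosen Ramsey for the resulting induced coloring. Making the bookkeeping work while simultaneously preserving the geometric "deep below" and $x$-order constraints — so that the final monochromatic set has the same \emph{signature}, not merely the same order type, as $Q_j$ — is the delicate part; everything else is a matter of carefully iterating the induction hypothesis and checking that signatures glue across a splitting.
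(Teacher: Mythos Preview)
Your overall framework---induction on $\sum_i |Q_i|$, exploiting the nondiagonal form, and singling out the cross-pair colors as the crux---matches the paper's. However, there is a genuine gap in your inductive step. You split only \emph{one} $Q_j$, and your case analysis in step~(4) assumes that if the cross-color $i$ differs from $j$ you can ``directly find $Q_i$ inside a single cluster''. But the cluster is (under the natural reading of your notation) Ramsey only for the list $(Q_1,\dots,Q_{j-1},Q_j^L,Q_{j+1},\dots,Q_k)$: for a given coloring you are guaranteed \emph{some} $Q_\ell$ in color $\ell$, not the specific $Q_i$ in color $i$. The bad scenario is that on the left you only ever extract $Q_j^L$ in color~$j$ and on the right only $Q_j^R$ in color~$j$, while the cross-color is some $i\neq j$; then you have no full $Q_i$ anywhere, and you cannot glue $Q_j^L$ to $Q_j^R$ either, since the cross-edges have the wrong color. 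No amount of product-Ramsey preprocessing on cluster pairs repairs this: it makes the cross-color well-defined but does not let you choose it.

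The paper's fix is to split \emph{every} $Q_i$ simultaneously. For each $i\in[k]$ one obtains by induction a set $T_i$ with $T_i \xrightarrow{{x}}(Q_1,\dots,Q_{i-1},Q_i^1,Q_{i+1},\dots,Q_k)^2$ and similarly $U_i$ for $Q_i^2$, and then places $S_1=T_1\cup\cdots\cup T_k$ deep below $S_2=U_1\cup\cdots\cup U_k$. Now whatever the cross-color $j$ turns out to be, one looks inside the copies of $T_j$ and $U_j$: either one of them already yields a full $Q_\ell$ in some color $\ell\neq j$, or both yield $Q_j^1$ and $Q_j^2$ in color $j$, which glue across the cross-edges of that same color $j$. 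This is also why the nondiagonal statement is the natural inductive hypothesis. Finally, the product-Ramsey/Hales--Jewett machinery you anticipate for cross-pairs is heavier than needed: the paper uses a simple two-part splitting and handles the cross-pairs via Lemma~\ref{lem-nesValtr}, which in turn rests only on the elementary vertex-Ramsey statement for signatures (Lemma~\ref{lem-signatures}).
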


Theorem~\ref{thm-decomposableRamseyNondiagonal} immediately implies Theorem~\ref{thm-decomposableRamsey} by choosing $Q_i = Q$ for every $i \in [k]$.
In the proof of Theorem~\ref{thm-decomposableRamseyNondiagonal}, we need to use the following fact, which says that all point sets are ordered $(k,1)$-Ramsey.

\begin{lemma}
\label{lem-signatures}
Let $k$ be a positive integer and let $Q_1,\dots,Q_k$ be point sets.
Then there is a point set $P=P(Q_1,\dots,Q_k)$ such that $P \xrightarrow{{x}} (Q_1,\dots,Q_k)^1$.
\end{lemma}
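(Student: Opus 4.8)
The plan is to prove a statement about colorings of single points (i.e., the case $p=1$), so this is really an ordered, order-type-preserving version of the pigeonhole principle combined with a product/iteration argument over the $k$ colors. The key observation is that for $p=1$ a $k$-coloring of $\binom{P}{1}$ is just a partition of $P$ into $k$ color classes, and we must guarantee that \emph{some} color class $i$ contains a copy of $Q_i$ with the same signature (not merely the same order type).

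First I would recall that, as noted just before the statement, "same signature" means there is a bijection preserving both triple-orientations and the left-to-right order of $x$-coordinates; so a copy of $Q_i$ with the correct signature is simply a subset of $P$ that, read from left to right, looks exactly like $Q_i$ read from left to right. The natural approach is induction on $k$. For the base case $k=1$, take $P = Q_1$ itself: the unique $1$-coloring makes all of $P$ have color $1$, and $P$ trivially has the same signature as $Q_1$. For the inductive step, I would build $P$ out of many "blocks", each of which is a scaled copy of a point set that handles the first $k-1$ colors, arranged so that they are pairwise in "deep below / deep above" position and separated in $x$-coordinate. Concretely, let $P' = P'(Q_1,\dots,Q_{k-1})$ be the set guaranteed by the inductive hypothesis for the first $k-1$ colors. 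Take $N := |Q_k|$ disjoint affine copies $P'_1,\dots,P'_N$ of $P'$, placed so that $x(P'_1) < x(P'_2) < \cdots < x(P'_N)$, and so that the vertical scaling and placement make the sequence $(P'_1,\dots,P'_N)$ imitate the signature of $Q_k$ at the "block level": i.e., picking one point $q_j$ from each block $P'_j$ yields a set $\{q_1,\dots,q_N\}$ with the same signature as $Q_k$, regardless of which point of $P'_j$ is chosen. This is achievable by making each block small enough (in $y$-extent and $x$-extent) relative to the geometry dictated by $Q_k$ — a standard "sufficiently flat/clustered" argument, using that $Q_k$ is in general position so there is room to thicken each of its points into a tiny cluster without changing any orientation.

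Now, given a $k$-coloring $c$ of $P := P'_1 \cup \cdots \cup P'_N$, look at each block $P'_j$. Either some block $P'_j$ has all of its points colored with color $k$ — impossible unless $|P'_j| \le$ something, so instead: in each block $P'_j$, if the restriction of $c$ to $P'_j$ uses only the first $k-1$ colors on some point, then by the inductive hypothesis applied to that block (which is a copy of $P'$) we would be done \emph{provided} the block's coloring, restricted to colors $1,\dots,k-1$, contains a monochromatic signed copy of the relevant $Q_i$; but a point colored $k$ spoils that. So the correct dichotomy is: either some block $P'_j$ contains at least one point of color $k$ that we can use, or some block is entirely free of color $k$. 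More carefully: consider the color class $k$. If block $P'_j$ contains a point of color $k$ for every $j \in [N]$, then choosing one such point from each block gives a set of $N = |Q_k|$ points, all of color $k$, and by construction this set has the same signature as $Q_k$ — done. Otherwise, some block $P'_j$ has \emph{no} point of color $k$, so $c$ restricted to $P'_j$ is a $(k-1)$-coloring of a copy of $P'$; by the inductive hypothesis there is an $i \in [k-1]$ and a subset of $P'_j$ of color $i$ with the same signature as $Q_i$ — done.

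The main obstacle I anticipate is not the combinatorial induction, which is clean, but the geometric realizability of the blocks: one must place $N$ copies of $P'$ so that (a) they respect $x(P'_1) < \cdots < x(P'_N)$, (b) any transversal (one point per block) has exactly the signature of $Q_k$, and (c) the copies themselves retain the internal signatures needed for the inductive hypothesis (affine maps preserving orientation and $x$-order suffice here, so (c) is free). Condition (b) is where care is needed: I would fix a generic realization of $Q_k = \{w_1,\dots,w_N\}$ with $x(w_1)<\cdots<x(w_N)$, pick pairwise-disjoint open disks $D_j \ni w_j$ small enough that \emph{every} transversal $(z_1,\dots,z_N) \in D_1\times\cdots\times D_N$ has the same signature as $(w_1,\dots,w_N)$ (possible by continuity of the sign of the determinant away from its zero set, together with general position of $Q_k$), and then place an affine copy of $P'$ inside each $D_j$. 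Shrinking each $D_j$ further if necessary also guarantees the $x$-coordinate separation $x(P'_j) < x(P'_{j+1})$. This is the only place where "general position" and "distinct $x$-coordinates" of the ambient sets are really used, and it is routine but must be stated explicitly.
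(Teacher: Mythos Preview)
Your proposal is correct and follows essentially the same approach as the paper: both construct $P$ by replacing each point of one of the $Q_i$'s with a small affine copy of the inductively obtained set, then run a dichotomy on whether the distinguished color hits every block. The only cosmetic difference is that the paper puts $Q_1$ on the outside and branches on color~$1$ (writing $P = Q_1 \circ Q_2 \circ \cdots \circ Q_k$), whereas you put $Q_k$ on the outside and branch on color~$k$; this is just a relabeling, and your continuity/small-disk justification for the block placement is exactly the paper's ``small neighborhood'' argument.
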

\begin{proof}
For point sets $A$ and $B$ we let $A \circ B$ be a point set that is constructed as follows.
We replace every point $a$ from $A$ by a small neighborhood $N_a$ and we place a small scaled and translated copy of $B$ into each such neighborhood.
The neighborhoods are chosen to be small enough so that $x(N_a)<x(N_{a'})$ for all $a,a'$ from $A$ with $x(a)<x(a')$ and so that no line intersects three of these neighborhoods.
It is easy to see that the operation $\circ$ is associative if we do not distinguish point sets with the same signatures.

We show by induction on $k$ that $Q_1 \circ \dots \circ Q_k \xrightarrow{{x}} (Q_1,\dots,Q_k)^1$.
The statement is trivial for $k=1$, so we assume that $k \ge 2$.
Let $c$ be a $k$-coloring of the points of $Q_1 \circ \dots \circ Q_k$.
If there is a point of color $1$ in every neighborhood $N_q$ for $q \in Q_1$, then we have $Q'$ such that $Q'$ and $Q_1$ have the same signature and all points from $Q'$ have color $1$ in $c$.
So we assume that there is a neighborhood $N_q$ for some $q \in Q_1$ such that no point from $Q_1 \circ \dots \circ Q_k \cap N_q$ has color $1$ in $c$.
Then the set $(Q_1 \circ \dots \circ Q_k) \cap N_q$ is colored with colors $2,3,\dots,k$ and, since signatures are preserved by any scaling and translation, it has the same signature as the set $Q_2\circ \dots \circ Q_k$.
Thus, by the induction hypothesis, $(Q_1 \circ \dots \circ Q_k) \cap N_q \xrightarrow{{x}} (Q_2,\dots,Q_k)^1$, which finishes the proof.
\end{proof}

We also use the following result, which is similar to Lemma~10 in~\cite{nesetrilValtr94}.

\begin{lemma}
\label{lem-nesValtr}
Let $S$ be a point set and let $S_1 \cup S_2$ be a splitting of $S$.
Let $k$ be a positive integer.
There is a point set $R=R(S,k)$ with a splitting $R=R_1 \cup R_2$ that satisfies the following two properties:
\begin{enumerate}[label=(\roman*)]
\item\label{item-lemNesVal1}$R$ is in general position,
\item\label{item-lemNesVal2}for every $k$-coloring $c$ of $R_1 \times R_2$ there exists a subset $S'$ of $R$ such that $(S' \cap R_i)$ and $S_i$ have the same signature for both $i \in \{1,2\}$, and $(S'\cap R_1) \times (S' \cap R_2)$ is monochromatic in $c$.
\end{enumerate}
\end{lemma}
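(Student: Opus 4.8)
The plan is the natural two-sided (bipartite) Ramsey argument: build $R$ as a splitting $R_1\cup R_2$ in which $R_1$ is Ramsey for $S_1$ under $k$-colorings of its \emph{points}, while $R_2$ is Ramsey for $S_2$ under colorings of its points with a much larger palette, and then decode a $k$-coloring of $R_1\times R_2$ first into a point-coloring of $R_2$ (recording, for each point of $R_1$, the color it sees) and then into a point-coloring of $R_1$.

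First I would apply Lemma~\ref{lem-signatures} with all $k$ target sets equal to $S_1$ to obtain a point set $R_1$ with $R_1\xrightarrow{{x}}(S_1)^1_k$, and put $m_1=|R_1|$. Crucially, $m_1$ is now a fixed number, so I may apply Lemma~\ref{lem-signatures} again, this time with $k^{m_1}$ target sets all equal to $S_2$, to obtain a point set $R_2$ with $R_2\xrightarrow{{x}}(S_2)^1_{k^{m_1}}$. Next I would realize $R=R_1\cup R_2$ geometrically. An affine map of the form $(x,y)\mapsto(x,\varepsilon y)$ with $\varepsilon>0$ preserves all triple orientations and the $x$-order, hence signatures (so neither Ramsey property is affected), and for small $\varepsilon$ it makes all slopes of lines through pairs of points of the image arbitrarily small. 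Applying such a map to both $R_1$ and $R_2$ and then translating the copy of $R_2$ sufficiently far up and to the right yields $x(R_1)<x(R_2)$ with $R_1$ deep below $R_2$; a generic choice of the parameters also puts $R$ in general position. Since $R_1$ lies deep below $R_2$, the orientation of every triple of $R$ with points in both parts is determined, so the signature of any set $T_1\cup T_2$ with $T_i\subseteq R_i$ depends only on the signatures of $T_1$ and $T_2$; I will use this tacitly.

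Now fix a $k$-coloring $c$ of $R_1\times R_2$; we may assume its color set is $[k]$. For $a\in R_1$ let $\phi_a\colon R_2\to[k]$ be given by $\phi_a(b)=c(a,b)$, and let $\Phi\colon R_2\to[k]^{R_1}$ be the product coloring $\Phi(b)=(\phi_a(b))_{a\in R_1}$, a coloring of the points of $R_2$ with $k^{m_1}$ colors. By the choice of $R_2$ there is a subset $S_2'\subseteq R_2$ with the same signature as $S_2$ that is monochromatic in $\Phi$; equivalently, for every $a\in R_1$ the value $c(a,b)$ is independent of $b\in S_2'$, and we denote it $c_a$. The map $a\mapsto c_a$ is a $k$-coloring of the points of $R_1$, so by the choice of $R_1$ there are a subset $S_1'\subseteq R_1$ with the same signature as $S_1$ and a color $i\in[k]$ with $c_a=i$ for all $a\in S_1'$. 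Then $c(a,b)=i$ for all $a\in S_1'$ and $b\in S_2'$, so $S'=S_1'\cup S_2'$ has the required properties: $S'\cap R_1=S_1'$ has the signature of $S_1$, $S'\cap R_2=S_2'$ has the signature of $S_2$, and $(S'\cap R_1)\times(S'\cap R_2)$ is monochromatic in $c$.

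I expect the combinatorial core to be routine; the two points needing care are ordering the construction correctly (fixing $|R_1|$ before choosing $R_2$, and using a palette of size $k^{|R_1|}$ for $R_2$ so that a single monochromatic copy of $S_2$ stabilizes the color seen from every point of $R_1$ at once) and the geometric realization guaranteeing both the ``deep below'' relation and general position while preserving signatures. Neither seems to present a genuine obstacle, which matches the remark that this lemma is close to Lemma~10 of~\cite{nesetrilValtr94}.
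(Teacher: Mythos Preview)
Your proposal is correct and follows essentially the same argument as the paper: build $R_1$ with $R_1\xrightarrow{{x}}(S_1)^1_k$, then $R_2$ with $R_2\xrightarrow{{x}}(S_2)^1_{k^{|R_1|}}$, place them so that $R_1\cup R_2$ is a splitting (via translation or, as the paper also notes, an affine flattening), and decode a $k$-coloring of $R_1\times R_2$ by first coloring the points of $R_2$ by their full vector of colors seen from $R_1$ and then coloring the points of $R_1$ by the residual constant color. Your extra remark that in a splitting the signature of $T_1\cup T_2$ depends only on the signatures of the parts is exactly what the paper uses (tacitly here, and explicitly in the proof of Theorem~\ref{thm-decomposableRamseyNondiagonal}).
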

\begin{proof}
Let $R_1$ and $R_2$ be sets such that $R_1 \xrightarrow{{x}} (S_1)^1_k$ and $R_2 \xrightarrow{{x}} (S_2)^1_{k^{|R_1|}}$.
The sets $R_1$ and $R_2$ exist by Lemma~\ref{lem-signatures}.
We construct $R$ by translating $R_1$ and $R_2$ so that $R_1 \cup R_2$ is a splitting of $R$. Alternatively, we may first affinely transform $R_1$ and $R_2$ to make them sufficiently flat.

Let $c$ be a $k$-coloring of~$R_1 \times R_2$.
Let $z_1 < \cdots < z_{|R_1|}$ be the ordering of the points of~$R_1$ according to their increasing $x$\nobreakdash-coordinates.
We assign a vector $v(x) \in [k]^{|R_1|}$ to each $x \in R_2$, where the $i$th coordinate of $v(x)$ is the color of the pair $(z_i,x) \in R_1 \times R_2$ in $c$.
Note that the number of distinct vectors $v(x)$ is at most $k^{|R_1|}$.
Let $c'$ be the $k^{|R_1|}$-coloring of the points from $R_2$ obtained by coloring each point $x \in R_2$ with $v(x)$.
It follows from the choice of $R_2$ that there is a subset $S'_2$ of $R_2$ such that $S'_2$ and $S_2$ have the same signature and all points of $S'_2$ have the same color in $c'$.

Now, for every point $z$ from $R_1$, all pairs $(z,x)$ with $x \in S'_2$ have the same color $i_z$ from $[k]$.
Let $c''$ be the $k$-coloring of the points of $R_1$ where every point $z \in R_1$ has the color $i_z$.
By the choice of $R_1$ there is a subset $S'_1$ of $R_1$ such that $S'_1$ and $S_1$ have the same signature and $S'_1$ is monochromatic in $c''$.  
It follows from the choice of $c''$ that $S'_1 \times S'_2$ is monochromatic in $c$.
\end{proof}

We are now ready to prove Theorem~\ref{thm-decomposableRamseyNondiagonal}.

\begin{proof}[Proof of Theorem~\ref{thm-decomposableRamseyNondiagonal}]
Let $k$ be a positive integer and let $Q_1,\dots,Q_k$ be decomposable point sets.
We proceed by induction on $|Q_1|+\dots+|Q_k|$ and we find a point set $P$ with $P \xrightarrow{{x}} (Q_1,\dots,Q_k)^2$.
We assume that $k \ge 2$, as otherwise we can choose $P = Q_1$.

If there is an $i \in [k]$ with $|Q_i|=1$, then any non-empty point set $P$ satisfies $P \xrightarrow{{x}} (Q_1,\dots,Q_k)^2$.
This constitutes the base case.

For the induction step, we thus assume $|Q_1|,\dots,|Q_k| \ge 2$.
For every $i \in [k]$, let $Q^1_i$ and $Q^2_i$ be two nonempty disjoint subsets of $Q_i$ such that $Q^1_i \cup Q^2_i$ is a splitting of $Q_i$ and $Q^1_i$ and $Q^2_i$ are both decomposable.
The parts $Q^1_i$ and $Q^2_i$ exist, since $Q_i$ is decomposable and $|Q_i| \ge 2$.

For every $i \in [k]$, we let $T_i$ be a point set such that 
\[T_i \xrightarrow{{x}} (Q_1,\dots,Q_{i-1},Q^1_i,Q_{i+1},\dots,Q_k)^2.\]
The sets $T_i$ exist by the induction hypothesis, since 
\[|Q_1|+\dots+|Q_{i-1}|+|Q_i^1|+|Q_{i+1}|+\cdots+|Q_k| < |Q_1|+\cdots+|Q_k|\]
and all the sets $Q_1,\dots,Q_{i-1},Q^1_i,Q_{i+1},\dots,Q_k$ are decomposable.
Similarly, for every $i \in [k]$, we let $U_i$ be a point set such that 
\[U_i \xrightarrow{{x}} (Q_1,\dots,Q_{i-1},Q^2_i,Q_{i+1},\dots,Q_k)^2.\]
Again, the sets $U_i$ exist by the induction hypothesis.

Let $S_1$ be a disjoint union $T_1 \cup \cdots \cup T_k$ and $S_2$ be a disjoint union $U_1 \cup \cdots \cup U_k$ such that $S_1$ and $S_2$ are both in general position.
Let $S$ be a point set obtained by translating and scaling $S_1$ and $S_2$ so that $S_1 \cup S_2$ is a splitting of $S$.
We apply Lemma~\ref{lem-nesValtr} to $S=S_1 \cup S_2$ and obtain a point set $P= P(S,k)$ with a splitting $P=R_1 \cup R_2$ such that 
\begin{enumerate}[label=(\roman*)]
\item $P$ is in general position,
\item for every $k$-coloring $c$ of $R_1 \times R_2$ there exists a subset $S'$ of $P$ such that $(S' \cap R_i)$ and $S_i$ have the same signature for both $i \in \{1,2\}$ and $(S'\cap R_1) \times (S' \cap R_2)$ is monochromatic in $c$.
\end{enumerate}

Let $c$ be a $k$-coloring of $\binom{P}{2}$.
By the definition of $S_1$ and $S_2$, there is a color $j \in [k]$ and sets $T'_i \subseteq R_1$ and $U'_i \subseteq R_2$, for each $i \in [k]$, such that $T'_i$ and $T_i$ have the same signature, $U'_i$ and $U_i$ have the same signature, and all pairs from $T'_i \times U'_i$ have color $j$ in $c$.

For every $i \in [k]$, the definition of $T_i$ implies that there is an $a_i \in [k]$ and a subset $A_i$ of $T'_i$ such that $A_i$ has the same signature as $Q_{a_i}$ if $a_i \neq i$ and as $Q^1_{a_i}$ if $a_i=i$ and, moreover, all pairs of points from $A_i$ have color $a_i$ in $c$.
Similarly, the definition of $U_i$ implies that there is $b_i \in [k]$ and a subset $B_i$ of $U'_i$ such that $B_i$ has the same signature as $Q_{b_i}$ if $b_i \neq i$ and as $Q^2_{b_i}$ if $b_i=i$ and, moreover, all pairs of points from $B_i$ have color $b_i$ in $c$.
We may assume that $a_i=i=b_i$ for every $i \in [k]$, as otherwise we have some $l \in [k]$ and a subset of $P$ with the same signature as $Q_l$ and with all pairs of points of color $l$ in $c$ and we are done.

Thus, for every $i \in [k]$, we have a set $Q'_i\subseteq T'_i$ with the same signature as $Q^1_i$ and a set $Q''_i\subseteq U'_i$ with the same signature as $Q^2_i$ such that all pairs of points from $Q'_i$ and all pairs of points from $Q''_i$ have color $i$ in $c$.
Since all pairs from $T'_i \times U'_i$ have color $j$ in $c$ for every $i \in [k]$, the sets $Q'_j$ and $Q''_j$ together give a set $Q'$  with all pairs from $\binom{Q'}{2}$ of color $j$ in $c$.

Since $Q'_j\cup Q''_j$ is a splitting of $Q'$, the set $Q'$ has the same signature as $Q_j$.
This finishes the proof.
\end{proof}

In the proof of Theorem~\ref{thm-decomposableRamseyNondiagonal} we use the following important property: if $A\cup B$ is a splitting and $A'\subseteq A$, $B'\subseteq B$, then $A'\cup B'$ is also a splitting. Thus, decomposable point sets form a maximal class of point sets such that all their subsets with at least two points have a nontrivial splitting. Therefore, generalizing Theorem~\ref{thm-decomposableRamseyNondiagonal} to a larger class of point sets seems to require new ideas.

%========================================================================
\section{A binary point-set predicate encoding all order ty\-pes}
\label{sec-FelsnerPredicate}

Here, using a result of Felsner~\cite{felsner97}, we construct a binary predicate that encodes order types of all sets from $\mathcal{P}$.
We note that such a predicate can also be obtained by a probabilistic argument.
This follows from the well-known fact that the number of automorphism-free graphs on $n$ vertices is $2^{\Theta(n^2)}$ while the number of different order types of sets with $n$ points is only $2^{\Theta(n\log{n})}$~\cite{alon86,goodmanPollack86}.
However, we give a specific example of such a binary predicate~$\Psi$ to also provide an example of a ``non-standard'' point set predicate.
Additionally, $\Psi$ can be used to encode order types of all generalized point sets, for which the probabilistic argument becomes slightly more complicated, as their number is also in $2^{\Theta(n^2)}$~\cite{goodmanPollack83,knuth92}.

First, we need some definitions.
An arrangement of lines is \emph{simple} if no three lines from this arrangement intersect in a common point and no two lines are parallel.
An arrangement of lines partitions the plane into \emph{faces} of dimensions 0, 1, and 2.
Incidences between faces of different dimensions naturally determine a partially ordered set, which is called the \emph{face lattice} of the arrangement.
It is well-known that sets of points in general position have a simple dual line arrangement, and that the face lattice of the arrangement determines the order type of the primal point set.
Hence, by reconstructing the dual line arrangement of a point set, we reconstruct its order type.

Let $\mathcal{A}$ be a simple arrangement of nonvertical lines $l_1,\dots,l_n$ labeled according to their decreasing slopes and oriented from left to right.
We define the following mapping $\psi_{\mathcal{A}} \colon \mathcal{A} \times [n-1] \to \{0,1\}$ for $\mathcal{A}$.
For every $i \in [n]$ and $j \in [n-1]$, let $\psi_{\mathcal{A}}(l_i,j)=1$ if the $j$th crossing along the line $l_i$ is a crossing with a line $l_k$ such that $k>i$.
Otherwise let $\psi_{\mathcal{A}}(l_i,j)=0$.

Let $P$ be a set from $\mathcal{P}$ with $|P|=n$.
We use $\delta$ to denote the duality transform that maps a point $(a,b) \in \mathbb{R}^2$ to the line $y=ax-b$.
Let $\mathcal{A}(P)$ be the dual line arrangement of $P$ obtained by $\delta$. 
Since $P$ is in general position and no two points from $P$ have the same $x$\nobreakdash-coordinate, the arrangement $\mathcal{A}(P)$ is simple.

We use the function $\psi_{\mathcal{A}(P)}$ to define a function $\Psi_P:(P)_2 \rightarrow \{0,1\}$.
The functions $\Psi_P$, $P \in \mathcal{P}$, will determine the predicate $\Psi$ by setting $\Psi(P,(p,q))=\Psi_P(p,q)$.
Let $\iota \colon P \to [n]$ be the mapping such that $\iota(p)=i$ if $\delta(p)=l_i$.
Note that $\iota$ is a one-to-one correspondence.
For distinct points $p$ and $q$ from $P$, we set 
\[\Psi_P(p,q) =
\begin{cases}
\psi_{\mathcal{A}(P)}(\delta(p),\iota(q)) & \text{if } \iota(p) >\iota(q),\\
\psi_{\mathcal{A}(P)}(\delta(p),\iota(q)-1) & \text{if } \iota(p) < \iota(q).  
\end{cases}\]

We show that $\Psi$ encodes the order type of $P$.
Let $p$ be a point from $P$ and let $l_i$ be the line from $\mathcal{A}(P)$ such that  $\delta(p)=l_i$.
The number of crossings of $l_i$ with $l_j$, $j>i$, is exactly $n-i=n-\iota(p)$.
Since the number of such crossings also equals $\sum_{j=1}^{n-1} \psi_{\mathcal{A}(P)}(\delta(p),j)=\sum_{q \in P \setminus\{p\}}\Psi_P(p,q)$, the value $\iota(p)$ is exactly $n-\sum_{q \in P \setminus\{p\}}\Psi_P(p,q)$.
Thus we can find the one-to-one correspondence $\iota$ using $\Psi_P$.
With $\iota$ we can easily recover the function $\psi_{\mathcal{A}(P)}$ from~$\Psi_P$.
The rest follows from Felsner's algorithm~\cite{felsner97}, which finds the face lattice of the line arrangement $\mathcal{A}(P)$ that is encoded by~$\psi_{\mathcal{A}(P)}$.

However, the predicate $\Psi$ is not locally consistent. 
This is because the second parameter $q$ in $\Psi_P(p,q)$ is not really related to the point $q$, since the $\iota(q)$th (or $(\iota(q)-1)$st if $\iota(p)<\iota(q)$) crossing on $\delta(p)$ might not be with the line $\delta(q)=L_{\iota(q)}$. 
The parameter $\iota(q)$ serves merely as an index of some crossing on the line $\delta(p)$.

%========================================================================
\section{Proof of Theorem~\ref{thm-noHereditaryPredicate}}
\label{sec-noHereditaryPredicate}

In this section we show that there are point sets on which no binary point-set predicate is locally consistent.

Let $G=(V,E)$ be a graph.
We say that $G$ is \emph{partially oriented} if each edge $e=\{u,v\}$ from $E$ either has no orientation or $e$ is oriented from $u$ to $v$ or from $v$ to $u$.
For a positive integer $k$, a \emph{$k$-edge-coloring of $G$} is a $k$-coloring of $E$.

The proof of Theorem~\ref{thm-noHereditaryPredicate} proceeds as follows.
If $\Gamma$ is a binary point-set predicate with codomain of size $k$ and $P \in \mathcal{P}$, we represent the function $\Gamma_P$ by a partially oriented graph $G(P,\Gamma)$, which is obtained by orienting some edges of $G_P = (P, \binom{P}{2})$, and by a certain $K$-edge-coloring $c(P,\Gamma)$ of~$G_P$, where $K = \binom{k+1}{2}$.
Then we show that if $\Gamma$ is locally consistent on~$P$, then every subgraph of $G(P,\Gamma)$ that is monochromatic in $c(P,\Gamma)$ avoids certain partially oriented subgraphs induced by four points in nonconvex position.
We find a decomposable set $S$ such that if $S$ induces a monochromatic subgraph of $G(P,\Gamma)$ in $c(P,\Gamma)$, then it contains some of the forbidden partially oriented subgraphs.
Finally, using Theorem~\ref{thm-decomposableRamsey}, we choose $P$ to be a point set such that $P \to (S)^2_{K}$, so that there is a monochromatic copy of $S$ in every $K$-edge-coloring of $G_P$.

Let $k$ be a fixed positive integer and let $Z$ be a set of size $k$.
Let $\prec$ be an arbitrary total order on~$Z$.
Let $\Gamma$ be a binary predicate with codomain $Z$ and let $P$ be a set from $\mathcal{P}$ of size $n$.

The function $\Gamma_P$ can be represented by a $K$-edge-coloring of a partially oriented graph in the following way.
Every edge $e=\{u,v\}$ of $G_P$ is oriented from $u$ to $v$ in $G(P,\Gamma)$ if $\Gamma_P(u,v) \prec \Gamma_P(v,u)$, and it is not oriented if $\Gamma_P(u,v)=\Gamma_P(v,u)$.
Let $c(P,\Gamma)$ be the $K$-edge-coloring of $G_P$ that assigns the color $\{\Gamma_P(u,v),\Gamma_P(v,u)\}$ to every edge of $G_P$  with vertices $u$ and $v$.

Clearly, given the partially oriented graph $G(P,\Gamma)$ and the edge-coloring $c(P,\Gamma)$, we can recover the function $\Gamma_P$.

Let $H$ be a partially oriented graph with vertex set $\{v_0,\dots,v_{n-1}\}$.
A vertex $v$ of $H$ is called a \emph{source in $H$} if all edges of $H$ containing $v$ are oriented from $v$.
Similarly, a vertex $u$ of $H$ is called a \emph{sink in $H$} if all edges of $H$ containing $u$ are oriented towards $u$.
We say that $H$ is an \emph{oriented cycle with orientation $(v_0,\dots,v_{n-1})$} if $H$ is a cycle with edges $\{v_i,v_{i+1}\}$ for every $i \in \{0,\dots,n-1\}$ (indices taken modulo $n$) and every edge $\{v_i,v_{i+1}\}$ is oriented from $v_i$ to $v_{i+1}$.

The following lemma captures a crucial property of the graph $G(P,\Gamma)$ and the coloring $c(P,\Gamma)$.

\begin{lemma}
\label{lem-predicateGraphColoring}
Let $H$ be an induced partially oriented subgraph of $G(P,\Gamma)$ that is monochromatic in $c(P,\Gamma)$.
If $H$ contains triangles $T_1$ and $T_2$ with distinct vertex sets $\{a_1,a_2,a_3\}$ and $\{b_1,b_2,b_3\}$, respectively, such that $\Delta_P(a_1,a_2,a_3)\neq \Delta_P(b_1,b_2,b_3)$, then the following conditions are satisfied.
\begin{enumerate}[label=(\roman*)]
\item\label{item-predicateGraph1}If no edge of $H$ is oriented, then $\Gamma$ is not locally consistent on $P$.
\item\label{item-predicateGraph2}If $T_1$ and $T_2$ are oriented cycles with orientations $(a_1,a_2,a_3)$ and $(b_1,b_2,b_3)$, respectively, then $\Gamma$ is not locally consistent on $P$.
\item\label{item-predicateGraph3}If $a_1$ is a source in $T_1$, $a_3$ is a sink in $T_1$, $b_1$ is a source in $T_2$, and $b_3$ is a sink in $T_2$, then $\Gamma$ is not locally consistent on $P$.
\end{enumerate}
\end{lemma}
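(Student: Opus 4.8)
The plan is to use the observation, recorded just before the statement, that $\Gamma_P$ can be reconstructed from the pair $\bigl(G(P,\Gamma),c(P,\Gamma)\bigr)$: an edge $\{u,v\}$ with color $\{z,z'\}$, $z \prec z'$, that is oriented from $u$ to $v$ has $\Gamma_P(u,v)=z$ and $\Gamma_P(v,u)=z'$, while an unoriented edge with color $\{z\}$ has $\Gamma_P(u,v)=\Gamma_P(v,u)=z$. By the definition of local consistency, to prove that $\Gamma$ is not locally consistent on $P$ it is enough to exhibit ordered triples $(a_1,a_2,a_3)$ and $(b_1,b_2,b_3)$ of points of $P$ with $\{a_1,a_2,a_3\}\neq\{b_1,b_2,b_3\}$, with $\Delta_P(a_1,a_2,a_3)\neq\Delta_P(b_1,b_2,b_3)$, and with $\Gamma_P(a_i,a_j)=\Gamma_P(b_i,b_j)$ for all distinct $i,j\in\{1,2,3\}$. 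In each of the three cases the triples supplied by $T_1$ and $T_2$ already have distinct vertex sets and opposite orientations by hypothesis, so the whole task reduces to checking the last equality, and for that I would simply read the relevant values of $\Gamma_P$ off the prescribed orientation pattern.

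Write the common color of the monochromatic graph $H$ as $\{z,z'\}$ with $z\preceq z'$, noting that $z=z'$ holds precisely when every edge of $H$ is unoriented (if one edge is unoriented its two values coincide, forcing the color to be a singleton, hence all edges unoriented). In case~\ref{item-predicateGraph1} this gives $\Gamma_P(u,v)=\Gamma_P(v,u)=z$ for every edge $\{u,v\}$ of $H$, in particular for every pair of points inside $T_1$ or $T_2$; thus $\Gamma_P(a_i,a_j)=z=\Gamma_P(b_i,b_j)$ for all distinct $i,j$, and we are done. In case~\ref{item-predicateGraph2} there is at least one oriented edge, so $z\prec z'$; since $T_1$ is the oriented cycle with orientation $(a_1,a_2,a_3)$, reading off values yields $\Gamma_P(a_1,a_2)=\Gamma_P(a_2,a_3)=\Gamma_P(a_3,a_1)=z$ and $\Gamma_P(a_2,a_1)=\Gamma_P(a_3,a_2)=\Gamma_P(a_1,a_3)=z'$, and the same six identities hold with each $a$ replaced by the corresponding $b$ because $T_2$ is the oriented cycle with orientation $(b_1,b_2,b_3)$. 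Comparing entry by entry gives $\Gamma_P(a_i,a_j)=\Gamma_P(b_i,b_j)$ for all distinct $i,j$.

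Case~\ref{item-predicateGraph3} needs one extra observation before reading off values: the hypotheses that $a_1$ is a source and $a_3$ a sink of $T_1$ force all three edges of $T_1$ to be oriented, namely $a_1\to a_2$ (incident to the source $a_1$), $a_2\to a_3$ (incident to the sink $a_3$) and $a_1\to a_3$ (consistent with both), and similarly for $T_2$. In particular $z\prec z'$, and reading off values gives $\Gamma_P(a_1,a_2)=\Gamma_P(a_1,a_3)=\Gamma_P(a_2,a_3)=z$ and $\Gamma_P(a_2,a_1)=\Gamma_P(a_3,a_1)=\Gamma_P(a_3,a_2)=z'$, with the identical statement for the $b_i$; once more $\Gamma_P(a_i,a_j)=\Gamma_P(b_i,b_j)$ for all distinct $i,j$. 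In every case the pair of ordered triples $(a_1,a_2,a_3)$, $(b_1,b_2,b_3)$ thus witnesses that $\Gamma$ is not locally consistent on $P$.

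I do not expect a genuine obstacle: the lemma is essentially a bookkeeping statement about the encoding $\bigl(G(P,\Gamma),c(P,\Gamma)\bigr)$. The only points deserving a moment of care are the small forcing argument in case~\ref{item-predicateGraph3} and the fact that monochromaticity of $H$ means one and the same pair $z\prec z'$ governs both triangles $T_1$ and $T_2$, which is exactly what makes the six equalities line up.
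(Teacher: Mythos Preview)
Your proof is correct and follows essentially the same approach as the paper: both establish that monochromaticity forces all edges of $H$ to share a single color $\{z,z'\}$ (so either all are oriented or none is), and then in each case read off the six values $\Gamma_P(a_i,a_j)$ and $\Gamma_P(b_i,b_j)$ from the prescribed orientation pattern to verify they agree pairwise. Your explicit determination of the edge directions in case~\ref{item-predicateGraph3} from the source/sink hypotheses is slightly more detailed than the paper, which simply writes down the resulting values, but the argument is the same.
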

\begin{proof}
Since $H$ is monochromatic in $c(P,\Gamma)$, it follows from the choice of $c(P,\Gamma)$ and $G(P,\Gamma)$ that either all the edges of~$H$ are oriented or none of them is.
In the latter case, there is an element $z \in Z$ such that $\Gamma_P(a_i,a_j)=z=\Gamma_P(b_i,b_j)$ for all distinct $i$ and $j$ from $\{1,2,3\}$.
Since $\Delta_P(a_1,a_2,a_3)\neq \Delta_P(b_1,b_2,b_3)$, we see that $\Gamma$ is not locally consistent on $P$.
This establishes part~\ref{item-predicateGraph1}.

We thus assume that all the edges in $H$ are oriented.
If $T_1$ and $T_2$ are oriented cycles with orientations $(a_1,a_2,a_3)$ and $(b_1,b_2,b_3)$, respectively, then it follows that there are two elements $z_1$ and $z_2$ from $Z$ such that $z_1 \prec z_2$ and 
\[\Gamma_P(a_1,a_2)=\Gamma_P(a_2,a_3)=\Gamma_P(a_3,a_1)=z_1=\Gamma_P(b_1,b_2)=\Gamma_P(b_2,b_3)=\Gamma_P(b_3,b_1) \]
and
\[\Gamma_P(a_2,a_1)=\Gamma_P(a_3,a_2)=\Gamma_P(a_1,a_3)=z_2=\Gamma_P(b_2,b_1)=\Gamma_P(b_3,b_2)=\Gamma_P(b_1,b_3) .\]
Again, since $\Delta_P(a_1,a_2,a_3)\neq \Delta_P(b_1,b_2,b_3)$, we see that $\Gamma$ is not locally consistent on $P$ and part~\ref{item-predicateGraph2} follows.

Finally, we assume that the assumptions in part\ref{item-predicateGraph3} are met.
The choice of $c(P,\Gamma)$ and $G(P,\Gamma)$ then implies that there are $z_1,$ and $z_2$ from $Z$ such that $z_1 \prec z_2$ and
\[\Gamma_P(a_1,a_2)=\Gamma_P(a_2,a_3)=\Gamma_P(a_1,a_3)=z_1=\Gamma_P(b_1,b_2)=\Gamma_P(b_2,b_3)=\Gamma_P(b_1,b_3) \]
and
\[\Gamma_P(a_2,a_1)=\Gamma_P(a_3,a_2)=\Gamma_P(a_3,a_1)=z_2=\Gamma_P(b_2,b_1)=\Gamma_P(b_3,b_2)=\Gamma_P(b_3,b_1) .\]
Thus the predicate $\Gamma$ is not locally consistent on $P$, as $\Delta_P(a_1,a_2,a_3)\neq \Delta_P(b_1,b_2,b_3)$.
\end{proof}

The following lemma says that if $\Gamma$ is locally consistent on $P$ then every 4-tuple of points that is not in convex position and that induces a monochromatic subgraph of $G_P$ in $c(P,\Gamma)$ admits only four specific orientations in $G(P,\Gamma)$.
We will use this to reduce the cases to be considered for a monochromatic subgraph induced by five points.

\begin{lemma}
\label{lem-quadrupleOrientation}
Let $Q$ be a subset of $P$ such that $Q=\{q_1,q_2,q_3,q_4\}$ and $Q$ has exactly three extremal points $q_1$, $q_2$, and $q_3$.
Let $H$ be the partially oriented subgraph of $G(P,\Gamma)$ induced by $Q$.
If $\Gamma$ is locally consistent on $P$ and $H$ is monochromatic in $c(P,\Gamma)$, then the set $\{q_1,q_2,q_3\}$ induces an oriented triangle in $H$ and $q_4$ is either a source or a sink in $H$.
\end{lemma}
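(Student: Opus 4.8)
The plan is to exploit Lemma~\ref{lem-predicateGraphColoring} by locating, inside the monochromatic graph $H$, two triangles of ``compatible shape'' but opposite orientation. As in the first paragraph of the proof of Lemma~\ref{lem-predicateGraphColoring}, since $H$ is monochromatic in $c(P,\Gamma)$, either all edges of $H$ are oriented or none of them is. To rule out the latter, set $\epsilon=\Delta_P(q_1,q_2,q_3)$; since $q_4$ lies inside the triangle $q_1q_2q_3$, one has $\Delta_P(q_1,q_2,q_4)=\Delta_P(q_2,q_3,q_4)=\Delta_P(q_3,q_1,q_4)=\epsilon$, and therefore $\Delta_P(q_1,q_3,q_4)=-\epsilon$. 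So the triangles on the distinct vertex sets $\{q_1,q_2,q_4\}$ and $\{q_1,q_3,q_4\}$ have opposite orientations, and if no edge of $H$ were oriented, Lemma~\ref{lem-predicateGraphColoring}\ref{item-predicateGraph1} would contradict the local consistency of $\Gamma$ on $P$. Hence all edges of $H$ are oriented; in particular $\{q_1,q_2,q_3\}$ induces an oriented triangle, which is the first half of the conclusion, and $H$ is an orientation of the complete graph on $Q$.

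For the second half, suppose for contradiction that $q_4$ is neither a source nor a sink of $H$. I would first cut down the case analysis by two symmetries. Replacing the total order $\prec$ on $Z$ by its reverse reverses every orientation in $G(P,\Gamma)$ while leaving $c(P,\Gamma)$ and the local consistency of $\Gamma$ unchanged, so we may assume that $q_4$ has exactly two out-neighbours among $q_1,q_2,q_3$. Relabelling $q_1,q_2,q_3$ cyclically --- an order-type automorphism of the configuration that does not affect the statement --- we may further assume that the unique in-neighbour of $q_4$ among them is $q_2$; that is, $H$ contains the arcs $q_4\to q_1$, $q_4\to q_3$, and $q_2\to q_4$. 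Observe that then the set $\{q_1,q_3,q_4\}$ induces a transitive triangle with source $q_4$.

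It then remains to go through the eight possible orientations of the outer triangle $\{q_1,q_2,q_3\}$. For each of them, one determines, for each of the four triangles induced by a $3$-element subset of $Q$, whether it is a directed triangle (an oriented cycle) or a transitive triangle, and records its orientation: read along the arcs in the cyclic case, and as $(\mathrm{source},\mathrm{middle},\mathrm{sink})$ in the transitive case. This uses the $\Delta_P$-values above together with the facts that a cyclic rotation of a triple leaves $\Delta_P$ unchanged while a transposition flips its sign (so, for instance, $\Delta_P(q_2,q_4,q_3)=-\epsilon$, $\Delta_P(q_4,q_1,q_3)=-\epsilon$, and $\Delta_P(q_4,q_3,q_1)=\epsilon$). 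In each of the eight cases, one finds either two directed triangles with opposite orientations, to which Lemma~\ref{lem-predicateGraphColoring}\ref{item-predicateGraph2} applies, or two transitive triangles whose $(\mathrm{source},\mathrm{middle},\mathrm{sink})$-orientations differ, to which Lemma~\ref{lem-predicateGraphColoring}\ref{item-predicateGraph3} applies; either way $\Gamma$ is not locally consistent on $P$, a contradiction. Hence $q_4$ is a source or a sink of $H$.

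The bookkeeping of the last step is the part I expect to be delicate: one must keep separate the two ways in which a triangle's vertices get ordered --- by the arcs of a directed triangle versus by the source--middle--sink pattern of a transitive one --- track the $\Delta_P$-signs correctly, and verify that a usable pair of triangles really appears in each of the eight configurations. The two ``without loss of generality'' reductions also deserve a short but careful justification.
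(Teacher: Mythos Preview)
Your first paragraph is correct, but the sentence ``in particular $\{q_1,q_2,q_3\}$ induces an oriented triangle, which is the first half of the conclusion'' misreads the terminology. In this paper an \emph{oriented triangle} means an oriented $3$-cycle (see the definition of ``oriented cycle with orientation $(v_0,\dots,v_{n-1})$'' and the last sentence of the paper's proof, which counts exactly four admissible orientations of $H$: two cyclic orientations of the outer triangle times source/sink for $q_4$). Knowing that every edge of $H$ is oriented tells you only that the outer triangle is a tournament; it could still be transitive. So the first half of the conclusion is \emph{not} established at that point.

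This matters for the rest of your argument as well. Your case analysis is carried out under the standing assumption that $q_4$ is neither a source nor a sink. Even if all eight subcases go through, you will have proved only the second half of the conjunction: you never examine the situation ``outer triangle transitive and $q_4$ a source or sink'', so you cannot conclude that the outer triangle is cyclic. The paper handles precisely this missing case in the paragraph beginning ``Now we show that if the set $\{q_1,q_2,q_3\}$ does not induce an oriented triangle in $H$, then $\Gamma$ is not locally consistent on $P$.'' There the outer triangle is assumed transitive (with $q_1$ a source and $q_2$ a sink), and a short analysis using parts~\ref{item-predicateGraph2} and~\ref{item-predicateGraph3} of Lemma~\ref{lem-predicateGraphColoring} derives a contradiction regardless of how the edges at $q_4$ are oriented. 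You need an analogous step; with your symmetries in place it suffices to add the case ``$q_4$ is a source (or, by reversing $\prec$, a sink) and the outer triangle is transitive'' and produce two transitive triangles with opposite $(\text{source},\text{middle},\text{sink})$-orientations, exactly as you do elsewhere. Once that gap is filled, your approach (fixing the edges at $q_4$ first and then casing on the outer triangle) is a legitimate alternative to the paper's (casing on the outer triangle first).
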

\begin{proof}
Assume that $\Gamma$ is locally consistent on $P$.
Since $H$ is monochromatic in $c(P,\Gamma)$, part~\ref{item-predicateGraph1} of Lemma~\ref{lem-predicateGraphColoring} implies that all the edges of $H$ are oriented.

If the set $\{q_1,q_2,q_3\}$ induces an oriented triangle $T$ in $H$, then $q_4$ is either a source or a sink in $H$.
Otherwise, we assume without loss of generality that the triangle $T$ has orientation $(q_1,q_2,q_3)$, the edge $\{q_1,q_4\}$ is oriented from $q_1$ to $q_4$, and $\{q_2,q_4\}$ from $q_4$ to $q_2$.
Now, if the edge $\{q_3,q_4\}$ is oriented from $q_4$ to $q_3$, then part~\ref{item-predicateGraph3} of Lemma~\ref{lem-predicateGraphColoring} applied to the triples $(q_1,q_4,q_2)$ and $(q_4,q_2,q_3)$ implies that $\Gamma$ is not locally consistent on $P$; see part~(a) of Figure~\ref{fig-quadrupleOrientation}.
On the other hand, if the edge $\{q_3,q_4\}$ is oriented from $q_3$ to $q_4$, then part~\ref{item-predicateGraph3} of Lemma~\ref{lem-predicateGraphColoring} applied to the triples $(q_1,q_4,q_2)$ and $(q_3,q_1,q_4)$ again implies that $\Gamma$ is not locally consistent on $P$; see part~(b) of Figure~\ref{fig-quadrupleOrientation}.

\begin{figure}
\begin{center}
\includegraphics{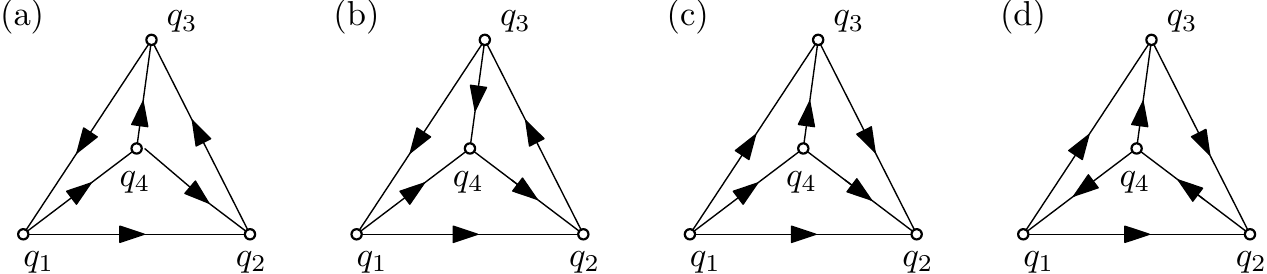}
\caption{Orientations of the graph $H$ showing that $\Gamma$ is not locally consistent on $P$.}
\label{fig-quadrupleOrientation}
\end{center}
\end{figure}

Now we show that if the set $\{q_1,q_2,q_3\}$ does not induce an oriented triangle in $H$, then $\Gamma$ is not locally consistent on $P$.
Without loss of generality, we assume that $q_1$ is a source and $q_2$ is a sink in the subgraph of $H$ induced by $\{q_1,q_2,q_3\}$.

First, suppose that the edge $\{q_2,q_4\}$ is oriented from $q_4$ to $q_2$.
The edge $\{q_1,q_4\}$ is oriented from $q_1$ to $q_4$ by part~\ref{item-predicateGraph3} of Lemma~\ref{lem-predicateGraphColoring} applied to the triples $(q_1,q_3,q_2)$ and $(q_4,q_1,q_2)$.
Part~\ref{item-predicateGraph3} of Lemma~\ref{lem-predicateGraphColoring} applied to the triples $(q_1,q_3,q_2)$ and $(q_3,q_4,q_2)$ implies that the edge $\{q_3,q_4\}$ is oriented from $q_4$ to $q_3$; see part~(c) of Figure~\ref{fig-quadrupleOrientation}.
However, then part~\ref{item-predicateGraph3} of Lemma~\ref{lem-predicateGraphColoring} applied to the triples $(q_1,q_3,q_2)$ and $(q_1,q_4,q_3)$ shows that $\Gamma$ is not locally consistent on $P$. 

It remains to consider the case when $\{q_2,q_4\}$ is oriented from $q_2$ to $q_4$.
The edge $\{q_1,q_4\}$ is oriented from $q_4$ to $q_1$ by part~\ref{item-predicateGraph3} of Lemma~\ref{lem-predicateGraphColoring} applied to the triples $(q_1,q_3,q_2)$ and $(q_1,q_2,q_4)$.
In particular, $\{q_1,q_2,q_4\}$ induces an oriented triangle with orientation $(q_1,q_2,q_4)$.
By part~\ref{item-predicateGraph2} of Lemma~\ref{lem-predicateGraphColoring} applied to the triples $(q_1,q_2,q_4)$ and $(q_1,q_3,q_4)$, the edge $\{q_3,q_4\}$ is oriented from $q_4$ to $q_3$; see part~(d) of Figure~\ref{fig-quadrupleOrientation}.
Then, however,  part~\ref{item-predicateGraph2} of Lemma~\ref{lem-predicateGraphColoring} applied to the triples $(q_1,q_2,q_4)$ and $(q_4,q_3,q_2)$ implies that $\Gamma$ is not locally consistent on~$P$.

Altogether, we see that $H$ admits only the following four orientations:
the set $\{q_1,q_2,q_3\}$ induces an oriented triangle in $H$ and $q_4$ is either a source or a sink in $H$.
\end{proof}

\begin{observation}
\label{obs-decomposable5Tuples}
There is a 5-tuple $S$ of points that has three extremal points and that  is decomposable.
\end{observation}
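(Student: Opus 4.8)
The plan is to exhibit an explicit five-point set and verify the two required properties directly. It is convenient to first record the elementary fact that \emph{every point set with at most three points is decomposable}: a single point is decomposable by definition; for two points the splitting into the two singletons satisfies the ``deep below'' conditions vacuously; and three points $c_1,c_2,c_3$ in this order of $x$-coordinates are decomposed by the splitting $\{c_1,c_2\}\cup\{c_3\}$ if $\Delta(c_1,c_2,c_3)=1$ (so that $c_3$ lies above $\overline{c_1c_2}$, the reverse condition being vacuous) and by $\{c_1\}\cup\{c_2,c_3\}$ if $\Delta(c_1,c_2,c_3)=-1$.

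Next I would take
\[S=\{u,q_1,q_2,q_3,q_4\},\qquad u=(-1,-10),\ q_1=(0,3),\ q_2=(1,2),\ q_3=(2,0),\ q_4=(3,3),\]
which is in general position and has distinct $x$-coordinates. To see that $S$ is decomposable, use the splitting $S=\{u\}\cup S'$ with $S'=\{q_1,q_2,q_3,q_4\}$: one has $x(u)<x(S')$, the point $u$ lies below each of the six lines spanned by $S'$ (so $\{u\}$ is deep below $S'$, the reverse condition being vacuous), the singleton $\{u\}$ is decomposable, and $S'$ is decomposable via the splitting $S'=\{q_1,q_2,q_3\}\cup\{q_4\}$, since $q_4$ lies above the three lines spanned by $\{q_1,q_2,q_3\}$ and both parts have at most three points. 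All of this amounts to routine sign computations with $\Delta$.

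It then remains to count the extremal points of $S$. The set $S'$ has the triangle $q_1q_3q_4$ as its convex hull, with $q_2$ in the interior. When $u$ is added far below $S'$, the segment $uq_4$ passes below all of $q_1,q_2,q_3$, so $q_3$ also moves to the interior, while the upper boundary of the hull stays the chain $u,q_1,q_4$; hence the convex hull of $S$ is the triangle $uq_1q_4$, with $q_2$ and $q_3$ in its interior. Thus $S$ is decomposable and has exactly three extremal points, which is what is claimed.

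I expect the hull count to be the only delicate point, and it is worth explaining why. The ``deep below'' relation tends to push the two parts of a splitting into convex position; in fact, if $u$ were placed only slightly below $S'$, then all five points of $S$ would be extremal. What makes the hull collapse to a triangle is that $u$ lies far enough below $S'$ that even the lowest point $q_3$ of $S'$ lies above the chord $uq_4$. In the configuration above this happens automatically: the ``deep below'' requirement already forces the $y$-coordinate of $u$ to lie below $-9$ (the value at $x=-1$ of the line $\overline{q_3q_4}$), which is exactly the threshold below which $q_3$ lies above the chord $uq_4$.
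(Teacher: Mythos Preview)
Your proof is correct and follows the same approach as the paper: exhibit an explicit five-point configuration and verify both properties. The paper's own proof is terser---it simply points to a figure showing the configuration and the first two steps of the decomposition---whereas you give explicit coordinates and check the ``deep below'' and convex-hull conditions by hand; both are valid.

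One small slip in your final (purely motivational) paragraph: you write that if $u$ were placed only slightly below $S'$ then ``all five points of $S$ would be extremal'', but you have already observed that $q_2$ lies in the interior of the convex hull of $S'=\{q_1,q_2,q_3,q_4\}$, so at most four points of $S$ could ever be extremal. The threshold computation for $q_3$ that follows is correct, and this remark does not affect the actual proof.
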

\begin{proof}
It suffices to find a recursive decomposition of some 5-tuple of points with three extremal points such that the decomposition satisfies the conditions in the definition of decomposable sets.
Such a set and the first two steps of this decomposition are illustrated in Figure~\ref{fig-decomposable5Tuple}.
\end{proof}

\begin{figure}
\begin{center}
\includegraphics{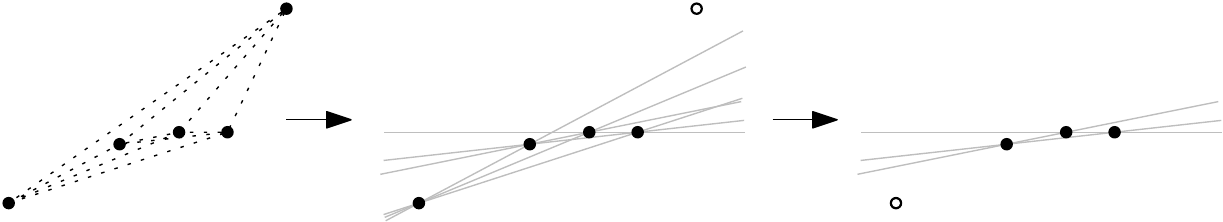}
\caption{A decomposable point set of size $5$ with three extremal points.
The parts of the splitting are distinguished by black and white in every step.}
\label{fig-decomposable5Tuple}
\end{center}
\end{figure}

We are now ready to prove Theorem~\ref{thm-noHereditaryPredicate}.
Let $S$ be a decomposable set of five points with three extremal points and let $P$ be a point set such that $P \to (S)^2_{K}$. 
The sets $S$ and~$P$ exist by Observation~\ref{obs-decomposable5Tuples} and Theorem~\ref{thm-decomposableRamsey}.

Suppose that $\Gamma$ is a binary predicate with a codomain of size $k$.
By the choice of~$P$, there is a subset $R$ of~$P$ such that $R$ and $S$ have the same order type and $\binom{R}{2}$ is monochromatic in $c(P,\Gamma)$.
Let $r_1,r_2,r_3,r_4,r_5$ be the points in $R$ such that $r_3$ and $r_4$ are in the interior of the convex hull of $R$, the line $\overline{r_3r_4}$ separates $r_5$ from $r_1$ and $r_2$, and the line $\overline{r_1r_3}$ separates $r_5$ from $r_2$ and $r_4$; see Figure~\ref{fig-5TupleOrientation}.
For $i\in\{1,2,3,4\}$, let $Q_i = R \setminus\{r_i\}$ and note that each $Q_i$ has the same order type as the set $Q$ from Lemma~\ref{lem-quadrupleOrientation}.

Consider the partially oriented subgraph $F$ of $G(P,\Gamma)$ induced by $R$ and, for every $i \in \{1,2,3,4\}$, let $H_i$ be the partially oriented subgraph of $F$ induced by~$Q_i$.
We show that at least one of the graphs $H_i$ has none of the allowed orientations.

\begin{figure}
\centering
\includegraphics{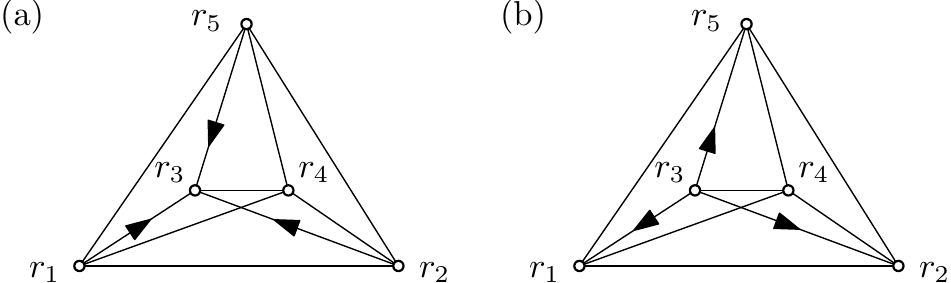}
\caption{Orientations of the graph $F$ showing that $\Gamma$ is not locally consistent on $P$.}
\label{fig-5TupleOrientation}
\end{figure}

By Lemma~\ref{lem-quadrupleOrientation} applied to $H_4$, the point $r_3$ is either a source or a sink in $H_4$. But then the set $\{r_2,r_3,r_5\}$ does not induce an oriented triangle in $H_1$, so by Lemma~\ref{lem-quadrupleOrientation} applied to~$H_1$, the predicate $\Gamma$ is not locally consistent on $P$; see Figure~\ref{fig-5TupleOrientation}. 

This finishes the proof of Theorem~\ref{thm-noHereditaryPredicate}.

%========================================================================
\section{Proof of Proposition~\ref{prop-hereditaryPredicate}}
\label{sec-hereditaryPredicate}

We construct a binary predicate $\Phi$ with codomain $\{-1,1\}$ that is locally consistent on the class $\mathcal{W}$ of all wheel sets and that encodes their order types.

For a wheel set $P$, let $w_P$ be the leftmost point of $P$ if the set $P$ is in convex position and let $w_P$ be the unique point of $P$ in the interior of the convex hull of $P$ otherwise.
We define the predicate $\Phi$ by setting
\[\Phi_P(p,q) =
\begin{cases}
-1 & \text{if } p=w_P,\\
1 & \text{if } q=w_P,\\
\Delta_P(p,q,w_P) & \text{otherwise.}
\end{cases}
\]
for every $P \in \mathcal{W}$ and every pair $(p,q) \in (P)_2$.

We first show that $\Phi$ encodes order types of all sets from $\mathcal{W}$.
Let $P=\{p_1,\dots,p_n\}$ and $Q=\{q_1,\dots,q_n\}$ be two wheel sets and let $f \colon P \to Q$ be a one-to-one correspondence such that $\Phi_P(p_i,p_j)=\Phi_Q(f(p_i),f(p_j))$ for all distinct $i$ and $j$ from $([n])_2$.
For a point $p \in P$, we have $\Phi_P(p,q)=-1$ and $\Phi_P(q,p)=1$ for every $q \in P \setminus \{p\}$ if and only if $p=w_P$. 
An analogous statement is true for $w_Q$ and $\Phi_Q$.
Thus we have $f(w_P)=f(w_Q)$.
Let $(p_i,p_j,p_k)$ be a triple from $(P)_3$.
Assume first that $w_P \in \{p_i,p_j,p_k\}$.
Without loss of generality, we assume $w_P=p_k$, as otherwise we proceed analogously.
Then $\Delta_P(p_i,p_j,p_k) = \Phi_P(p_i,p_j)=\Phi_Q(f(p_i),f(p_j))=\Delta_Q(f(p_i),f(p_j),f(p_k))$ and the triples $(p_i,p_j,p_k)$ and $(f(p_i),f(p_j),f(p_k))$ have the same orientation.

Assume $w_P \notin \{p_i,p_j,p_k\}$.
Observe that $\Delta_P(p_i,p_j,p_k)=1$ if and only if $\Phi_P(p_i,p_j) + \Phi_P(p_j,p_k) + \Phi_P(p_k,p_i) \ge 1$.
In particular, if $\Phi_P(p_i,p_j) + \Phi_P(p_j,p_k) + \Phi_P(p_k,p_i) = 3$, then $w_P$ is in the interior of the triangle $p_ip_jp_k$ and if $\Phi_P(p_i,p_j) + \Phi_P(p_j,p_k) + \Phi_P(p_k,p_i) = 1$, then $\{p_i,p_j,p_k,w_P\}$ is in convex position.
Similarly, $\Delta_P(p_i,p_j,p_k)=-1$ if and only if $\Phi_P(p_i,p_j) + \Phi_P(p_j,p_k) + \Phi_P(p_k,p_i) \le -1$.
Analogous statements are true for $\Phi_Q$ and $\Delta_Q$.
The assumption $\Phi_P(p_r,p_s)=\Phi_Q(f(p_r),f(p_s))$ for all distinct $r$ and $s$ from $([n])_2$ thus implies that the triples $(p_i,p_j,p_k)$ and $(f(p_i),f(p_j),f(p_k))$ have the same orientation.
Consequently, the point sets $P$ and $Q$ have the same order type and $\Phi$ encodes order types of all sets from $\mathcal{W}$.

We now show that $\Phi$ is locally consistent on $\mathcal{W}$.
Let $P$ be a wheel set and let $\{a_1,a_2,a_3\}$ and $\{b_1,b_2,b_3\}$ be two triples of points from $P$ with $\Phi_P(a_i,a_j)=\Phi_P(b_i,b_j)$ for all distinct $i$, $j$ from $\{1,2,3\}$.
Assume that $w_P=a_k$ for some $k \in \{1,2,3\}$ and $w_P \in \{b_1,b_2,b_3\}$.
Then $w_P=b_k$, because $b_k$ is the only point from $\{b_1,b_2,b_3\}$ with $\Phi_P(b,b_k)=-1$ for every $b \in \{b_1,b_2,b_3\}\setminus\{b_k\}$.
We let $i$ and $j$ be integers such that $\{i,j\} = \{1,2,3\} \setminus \{k\}$ and $i<j$.
It follows from the definition of $\Phi$ that $\Delta_P(a_i,a_j,a_k) = \Phi_P(a_i,a_j)=\Phi_P(b_i,b_j) = \Delta_P(b_i,b_j,b_k)$.
Consequently, $\Delta_P(a_1,a_2,a_3)=\Delta_P(b_1,b_2,b_3)$.

Assume that $w_P=a_k$ for some $k \in \{1,2,3\}$ and $w_P \notin \{b_1,b_2,b_3\}$.
Let $i$ and $j$ be integers such that $\{i,j\} = \{1,2,3\} \setminus \{k\}$ and $i<j$.
Then $\Delta_P(a_i,a_j,a_k) = \Phi_P(a_i,a_j) = \Phi_P(b_i,b_j)$ and it suffices to show that $\Phi_P(b_i,b_j) = \Delta_P(b_i,b_j,b_k)$.
Since $1=\Phi_P(a_i,a_k)=\Phi_P(b_i,b_k)=\Delta_P(b_i,b_k,w_P)$ and $1=\Phi_P(a_j,a_k)=\Phi_P(b_j,b_k)=\Delta_P(b_j,b_k,w_P)$, the points $b_i$ and $b_j$ lie on the same side of the line $\overline{b_kw_P}$.
If $\Phi_P(b_i,b_j) \neq \Delta_P(b_i,b_j,b_k)$, then, since $\Phi_P(b_i,b_j)=\Delta_P(b_i,b_j,w_P)$, the points $b_k$ and $w_P$ lie in opposite halfplanes determined by $\overline{b_ib_j}$.
However, then $b_i$ or $b_j$ lies in the interior of the convex hull of $\{b_j,b_k,w_P\}$ or $\{b_i,b_k,w_P\}$, respectively, which is impossible as $w_P$ is the only interior point of $P$ (if there is any) and $w_P \notin \{b_i,b_j,b_k\}$.
Therefore $\Phi_P(b_i,b_j) = \Delta_P(b_i,b_j,b_k)$, which implies $\Delta_P(a_1,a_2,a_3) = \Delta_P(b_1,b_2,b_3)$.
By symmetry, if $w_P \in \{b_1,b_2,b_3\}$ and $w_P \notin \{a_1,a_2,a_3\}$, then $\Delta_P(a_1,a_2,a_3)=\Delta_P(b_1,b_2,b_3)$.

It remains to deal with the case $w_P \notin \{a_1,a_2,a_3\}$ and $w_P \notin \{b_1,b_2,b_3\}$.
Then we have $\Delta_P(a_i,a_j,w_P)=\Phi_P(a_i,a_j)=\Phi_P(b_i,b_j)=\Delta_P(b_i,b_j,w_P)$ for all distinct $i$, $j$ from $\{1,2,3\}$.
Again, we have $\Delta_P(a_1,a_2,a_3)=1$ if and only if $\Phi_P(a_1,a_2) + \Phi_P(a_2,a_3) + \Phi_P(a_3,a_1) \ge 1$ and, similarly, $\Delta_P(a_1,a_2,a_3)=-1$ if and only if $\Phi_P(a_1,a_2) + \Phi_P(a_2,a_3) + \Phi_P(a_3,a_1) \le -1$.
Since analogous claims hold also for $\Delta_P(b_1,b_2,b_3)$, it follows that $\Delta_P(a_1,a_2,a_3) = \Delta_P(b_1,b_2,b_3)$.
Thus $\Phi$ is locally consistent on $P$, which finishes the proof.

%-----------------------------------------------------------------------------
\subsection{Predicates assigning tournaments to order types}
Observe that the predicate $\Phi$ from Proposition~\ref{prop-hereditaryPredicate} satisfies $\Phi_P(a,b) = - \Phi_P(b,a)$ for every wheel set $P$ and every pair $(a,b) \in (P)_2$.
Hence, $\Phi$ defines a tournament on $P$; that is, an orientation of the complete graph on~$P$.
We show that wheel sets are the only point sets with a locally consistent predicate of this form.

\begin{proposition}
Let $P$ be a point set and let $\Gamma$ be a binary point-set predicate with codomain $\{-1,1\}$ such that $\Gamma$ is locally consistent on $P$ and such that $\Gamma_P(a,b) = -\Gamma_P(b,a)$ for every pair $(a,b) \in (P)_2$. Then $P$ is a wheel set.
\end{proposition}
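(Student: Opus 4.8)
The plan is to reformulate the hypothesis as a statement about a tournament on $P$, combine it with Lemma~\ref{lem-quadrupleOrientation}, and then run a short geometric argument. First I would translate to tournaments: given an antisymmetric binary point-set predicate $\Gamma$ with codomain $\{-1,1\}$ that is locally consistent on $P$, orient the edge $\{a,b\}$ of the complete graph on $P$ from $a$ to $b$ whenever $\Gamma_P(a,b)=1$; this gives a tournament $T$ on $P$. Since $\Gamma$ is antisymmetric and its codomain has size $2$, in the notation of Section~\ref{sec-noHereditaryPredicate} every edge of $G(P,\Gamma)$ is oriented and the coloring $c(P,\Gamma)$ is constant, so every induced subgraph of $G(P,\Gamma)$ is monochromatic and Lemma~\ref{lem-quadrupleOrientation} applies to \emph{every} four-point subset of $P$ with exactly three extremal points: the three extremal points induce an oriented $3$-cycle in $T$, and the fourth point is a source or a sink among those four. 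I record the special case used repeatedly: if $p,q,r\in P$ and a point $w\in P$ lies in the interior of the triangle $pqr$, then the edges $wp$, $wq$, $wr$ of $T$ are all directed out of $w$ or all directed into $w$.

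The core of the argument is the following claim: every point $w\in P$ lying in the interior of $\mathrm{conv}(P)$ is a source or a sink of $T$, that is, all edges of $T$ incident to $w$ point the same way. To prove it, I would first observe that removing an interior point does not change the convex hull, so $\mathrm{conv}(P\setminus\{w\})=\mathrm{conv}(P)$; hence for any $p\in P\setminus\{w\}$ the ray from $p$ through $w$ leaves $\mathrm{conv}(P)$ through the relative interior of some hull edge $\{h,h'\}$, and then (by general position) $w$ lies in the interior of the triangle $phh'$. By the special case above, the edge $wp$ has the same direction as $wh$ and $wh'$; consequently it suffices to show that the edges joining $w$ to the \emph{vertices} of $\mathrm{conv}(P)$ all point the same way. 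For this one considers the ``linking'' graph on the vertices of $\mathrm{conv}(P)$ in which each hull vertex $h_i$ is joined to the two endpoints of the hull edge hit by the ray from $h_i$ through $w$; again by the special case, linked edges at $w$ have a common direction, so it is enough to prove that this linking graph is connected.

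Establishing that connectivity is the main obstacle. The natural idea is that, as the source point is swept once around $\partial\mathrm{conv}(P)$, the exit point of the ray through $w$ sweeps once around $\partial\mathrm{conv}(P)$ as well, so a partition of the hull vertices with no link across it would force a hull edge joining the two classes to be a common exit edge of a vertex in each class, which is impossible; but turning this into a proof requires care, since the exit point can cross such an edge at a moment when the source point is not a hull vertex, and one must rule this out by a finer case analysis (and one cannot shortcut it via a counting/"surjectivity" argument, which already fails for wheel sets). I expect this to be where the bulk of the work lies.

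Granting the claim, the proof finishes quickly. Suppose $P$ is not a wheel set; then $\mathrm{conv}(P)$ has at least two points $u,v\in P$ in its interior. By the claim each of $u,v$ is a source or a sink of $T$. If both are sources then $u\to v$ and $v\to u$, a contradiction; symmetrically if both are sinks. So, after possibly swapping $u$ and $v$, the point $u$ is a source and $v$ is a sink of $T$. Then for any two points $a,b\in P\setminus\{u,v\}$ the restriction of $T$ to $\{u,v,a,b\}$ is transitive (with source $u$ and sink $v$), hence has no $3$-cycle; since a four-point set not in convex position has three extremal points inducing an oriented $3$-cycle by Lemma~\ref{lem-quadrupleOrientation}, the set $\{u,v,a,b\}$ must be in convex position. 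On the other hand, letting the ray from $v$ through $u$ leave $\mathrm{conv}(P)$ through the relative interior of a hull edge $\{h_1,h_2\}$, we obtain that $u$ lies in the interior of the triangle $vh_1h_2$, so $\{u,v,h_1,h_2\}$ has exactly three extremal points and is \emph{not} in convex position, with $h_1,h_2\in P\setminus\{u,v\}$ — a contradiction. Hence $P$ is a wheel set.
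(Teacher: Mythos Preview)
Your approach works once the gap at the ``main obstacle'' is closed, and that gap is lighter than you fear. You do not need connectivity of the full linking graph; it is enough to show that any two \emph{adjacent} hull vertices $h,h'$ satisfy $f(h)=f(h')$, where $f(g)$ records the direction of the edge $wg$. Let $e(h),e(h')$ be the second intersections of the lines $wh,wh'$ with $\partial\mathrm{conv}(P)$. If $e(h)$ and $e(h')$ lie on the same hull edge $h_kh_{k+1}$, then $w\in\triangle h\,h_k\,h_{k+1}$ and $w\in\triangle h'\,h_k\,h_{k+1}$, so $f(h)=f(h_k)=f(h')$. Otherwise the boundary arc from $e(h)$ to $e(h')$ in the cone opposite the edge $hh'$ contains a hull vertex $h_m$, and then the ray from $h_m$ through $w$ exits through the edge $hh'$, so $w\in\triangle h\,h'\,h_m$ and again $f(h)=f(h')$. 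Going around the hull gives $f$ constant on hull vertices, and your Step~1 reduction finishes the claim. The remainder of your argument is correct.

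The paper's proof takes a different, shorter route. Since $c(P,\Gamma)$ is constant, the five-point analysis already carried out in the proof of Theorem~\ref{thm-noHereditaryPredicate} applies verbatim and shows that $P$ contains no five-point subset with triangular convex hull; hence no triangle of extremal points of $P$ contains \emph{both} interior points $p,q$. The line $pq$ therefore meets two disjoint hull edges $ab$ and $cd$; the points $a,b,c,d$ are in convex position with $p\in\triangle abc$ and $q\in\triangle cda$, and Lemma~\ref{lem-quadrupleOrientation} forces both $(a,b,c)$ and $(c,d,a)$ to be directed $3$-cycles. Sharing the edge $ac$, their cyclic orientations disagree while $\Delta_P(a,b,c)=\Delta_P(c,d,a)$, contradicting local consistency via Lemma~\ref{lem-predicateGraphColoring}\ref{item-predicateGraph2}. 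Your route is self-contained from Lemma~\ref{lem-quadrupleOrientation} alone and yields the pleasant intermediate fact that every interior point must be a global source or sink of the tournament; the paper's route is quicker but leans on the earlier five-point argument as a black box.
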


\begin{proof}
Suppose for contrary that $P$ is a point set with at least two interior points $p$, $q$.
Then, using the notation from Section~\ref{sec-noHereditaryPredicate}, the graph $G(P,\Gamma)$ is a tournament on $P$ and the coloring $c(P,\Gamma)$ uses only a single color $\{-1,1\}$. 
In the proof of Theorem~\ref{thm-noHereditaryPredicate} it has been shown that $P$ cannot contain a five-point set with triangular convex hull.
Therefore, no triangle determined by extremal points of $P$ contains the points $p$ and $q$ in its interior. 
In particular, the line determined by the points $p$ and $q$ crosses two disjoint edges $ab$ and $cd$ of the convex hull of $P$.
Then $p$ and $q$ lie in the convex hull of $\{a,b,c,d\}$.
Without loss of generality, let $p$ be inside the triangle $(a,b,c)$, $q$ be inside the triangle $(c,d,a)$, and $\Delta_P(a,b,c) = \Delta_P(c,d,a)$.
By Lemma~\ref{lem-quadrupleOrientation}, both $(a,b,c)$ and $(c,d,a)$ are oriented triangles.
Since the triangles intersect at the edge $ac$, they have opposite orientations. This contradicts the local consistency of $\Gamma$.
\end{proof}

%========================================================================
\section{Proof of Theorem~\ref{thm-smallSets}}
\label{sec-smallSets}

Here we show that for every positive integer $k$, there is a binary predicate with codomain of size $k$ that is locally consistent on all point sets of size at most $ck^{3/2}$ for some constant $c>0$ and that encodes their order types.

For an integer $n \ge 4$, let $k = \lceil c'n^{2/3} \rceil$ for some sufficiently large constant $c'$ and let $F$ be the set of functions $f \colon ([n])_2 \to [k]$.
Clearly, $|F|= k^{n(n-1)}$.
We say that two functions $f_1$ and $f_2$ from $F$ are \emph{equivalent}, written $f_1 \sim f_2$, if there is a permutation $\pi \colon [n] \to [n]$ such that $f_1(i,j) = f_2(\pi(i),\pi(j))$ for every pair $(i,j) \in ([n])_2$.
Note that there are at most $n!$ functions in each equivalence class of $\sim$.

We recall that the number of different order types of sets with $n$ points is at most $2^{c''n\log{n}}$ for some constant $c''$~\cite{alon86,goodmanPollack86}.
We show that there is a subset $F'$ of $F$ of size at least $2^{c''n\log{n}}$ such that no two functions from $F'$ are equivalent and every function $f$ from $F'$ satisfies the following condition: there are no two distinct subsets $\{a_1,a_2,a_3\}$ and $\{b_1,b_2,b_3\}$ of $[n]$ with $f(a_i,a_j)=f(b_i,b_j)$ for all distinct $i$ and $j$ from $\{1,2,3\}$.

Since $|F'| \ge 2^{c''n\log{n}}$, we can assign distinct functions from $F'$ to distinct order types of point sets of size~$n$.
For each order type $O$, we choose a point set $R_O$ with the order type $O$ as a representative of $O$ and we let $R_O=\{r_1,\dots,r_n\}$ be an arbitrary labeling of $R_O$.
For every point set $P$ with the order type $O$ there is a one-to-one correspondence $l_P \colon P \to R_O$ such that $\Delta_P(a,b,c) = \Delta_{R_O}(l_P(a),l_P(b),l_P(c))$ for every triple $(a,b,c) \in (P)_3$.
Let $f$ be the function from $F'$ that has been assigned to $O$.
We then let $\Gamma_P(p,p') = f(i,j)$ for every pair $(p,p') \in (P)_2$ such that $l_P(p) = r_i$ and $l_P(p') = r_j$.

We show that the resulting binary point-set predicate $\Gamma$ is locally consistent on point sets of size $n$ and encodes their order types.
The local consistency is ensured by the condition on functions from $F'$, which says that no function from $F'$ attains the same values on distinct triples from $\binom{[n]}{3}$.

To show that $\Gamma$ encodes the order types of all point sets of size $n$, we use the fact that the functions from $F'$ are pairwise nonequivalent and that each function from $F'$ is ``rigid''  in the sense that it has no nontrivial automorphism acting on $[n]$.
Let $P$ and $Q$ be two point sets of size $n$ with $\Gamma_P(p,p') = \Gamma_Q(g(p),g(p'))$ for every pair $(p,p') \in (P)_2$ and some one-to-one correspondence $g \colon P \to Q$.
Let $f_P$ and $f_Q$ be the functions from $F'$ used in the definitions of $\Gamma_P$ and $\Gamma_Q$, respectively.
It follows from the definition of $\Gamma_P$ and $\Gamma_Q$ that $f_P(i,j) = f_Q(\pi(i),\pi(j))$ for every pair $(i,j) \in ([n])_2$ and some permutation $\pi$ on $[n]$.
Since no two functions from $F'$ are equivalent, we have $f_P = f = f_Q$ for some $f \in F'$ and, in particular, $P$ and $Q$ have the same order type~$O$.
Since $f$ does not attain the same values on distinct triples from $\binom{[n]}{3}$ and $n \ge 4$, the permutation $\pi$ is the identity on $[n]$.
Consequently, the function $g$ is determined uniquely and maps every $p \in P$ with $l_P(p) = r_i$ to $q \in Q$ with $l_Q(q) = r_i$, where $l_P \colon P \to R_O$ and $l_Q \colon Q \to R_O$ are the one-to-one correspondences that preserve orientations of triples and that were used in the definitions of $\Gamma_P$ and $\Gamma_Q$, respectively.
Thus $l_P(p) = l_Q(g(p))$ for every $p \in P$.
Therefore $\Delta_P(a,b,c) = \Delta_{R_O}(l_P(a),l_P(b),l_P(c)) = \Delta_{R_O}(l_Q(g(a)),l_Q(g(b)),l_Q(g(c))) =  \Delta_Q(g(a),g(b),g(c))$ for every triple $(a,b,c) \in (P)_3$ and $\Gamma$ encodes the order types of point sets of size $n$.

Thus it remains to prove the existence of the set $F'$.
We use a probabilistic approach based on the Lov\'{a}sz local lemma~\cite{alonSpencer16}.

\begin{lemma}[The Lov\'{a}sz local lemma~{\cite[Lemma~5.1.1]{alonSpencer16}}]
\label{thm-LLL}
Let $A_1,\dots,A_m$ be events in an arbitrary probability space.
Let $D=([m],E)$ be a directed graph such that for each $i$, $1 \le i \le m$, the event $A_i$ is mutually independent of all the events $A_j$ with $(i,j) \notin E$.
Suppose that there are real numbers $x_1,\dots,x_m$ such that $0 \le x_i < 1$ and $\Pr[A_i] \le x_i \prod_{(i,j) \in E}(1-x_j)$ for every $i$ with $1 \le i \le m$.
Then 
\[\Pr\left[\bigcap_{i=1}^m \overline{A_i}\right] \ge \prod_{i=1}^m\left(1-x_i\right).\]
\end{lemma}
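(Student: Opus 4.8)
The plan is to deduce the lemma from a stronger statement proved by induction: for every index $i\in[m]$ and every subset $S\subseteq[m]\setminus\{i\}$ with $\Pr\bigl[\bigcap_{j\in S}\overline{A_j}\bigr]>0$, one has
\[\Pr\!\left[A_i \,\middle|\, \bigcap_{j\in S}\overline{A_j}\right]\le x_i .\]
I would induct on $s=|S|$. Once this is available, the lemma follows by the chain rule: $\Pr\bigl[\bigcap_{i=1}^{m}\overline{A_i}\bigr]=\prod_{i=1}^{m}\Pr\bigl[\overline{A_i}\mid\bigcap_{j<i}\overline{A_j}\bigr]\ge\prod_{i=1}^{m}(1-x_i)$, where every conditioning event has positive probability by the inductive statement, so the factorization is legitimate.

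For the base case $s=0$ the hypothesis gives $\Pr[A_i]\le x_i\prod_{(i,j)\in E}(1-x_j)\le x_i$, since each $1-x_j\le 1$. For the inductive step I would split $S$ into $S_1=\{\,j\in S:(i,j)\in E\,\}$ and $S_2=S\setminus S_1$, abbreviate $\mathcal{E}_T=\bigcap_{j\in T}\overline{A_j}$, and apply Bayes' rule:
\[\Pr[A_i\mid\mathcal{E}_S]=\frac{\Pr[A_i\cap\mathcal{E}_{S_1}\mid\mathcal{E}_{S_2}]}{\Pr[\mathcal{E}_{S_1}\mid\mathcal{E}_{S_2}]}.\]
The numerator is at most $\Pr[A_i\mid\mathcal{E}_{S_2}]$ by the elementary bound $\Pr[X\cap Y\mid Z]\le\Pr[X\mid Z]$, and $\Pr[A_i\mid\mathcal{E}_{S_2}]=\Pr[A_i]\le x_i\prod_{(i,j)\in E}(1-x_j)$, the equality being the only place the mutual-independence hypothesis is used (no $l\in S_2$ is an out-neighbour of $i$ in $D$). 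For the denominator, enumerate $S_1=\{j_1,\dots,j_r\}$; if $r=0$ the denominator is $1$ and the bound is immediate, and otherwise expand it telescopically as $\prod_{t=1}^{r}\Pr\bigl[\overline{A_{j_t}}\mid\mathcal{E}_{\{j_1,\dots,j_{t-1}\}}\cap\mathcal{E}_{S_2}\bigr]$, where each conditioning set has size at most $s-1$, so the induction hypothesis bounds the $t$-th factor from below by $1-x_{j_t}>0$. Hence the denominator is at least $\prod_{j\in S_1}(1-x_j)\ge\prod_{(i,j)\in E}(1-x_j)$, and dividing cancels this factor, leaving $\Pr[A_i\mid\mathcal{E}_S]\le x_i$, as desired.

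I expect no genuine obstacle here; the work is entirely bookkeeping. The points that need care are: tracking exactly which events are conditioned on at each stage; checking that every conditioning event has positive probability, which follows inductively from the bounds $\Pr[\overline{A_{j_t}}\mid\cdots]\ge 1-x_{j_t}>0$ so that no partial product vanishes; and invoking mutual independence exactly once, to replace $\Pr[A_i\mid\mathcal{E}_{S_2}]$ by $\Pr[A_i]$. Everything else is Bayes' rule, monotonicity of probability, and the telescoping expansion of the denominator.
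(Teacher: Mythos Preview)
The paper does not prove this lemma; it is quoted verbatim as a known result from Alon and Spencer's textbook and used as a black box in the proof of Theorem~\ref{thm-smallSets}. Your argument is the standard proof of the Lov\'{a}sz local lemma (essentially the one in the cited reference), and it is correct as written.
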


Let $f$ be a function from $F$ chosen uniformly independently at random and let $\{a_1,a_2,a_3\}$ and $\{b_1,b_2,b_3\}$ be two distinct subsets of $[n]$.
Let $A=A_{\{(a_1,a_2,a_3),(b_1,b_2,b_3)\}}$ be the event that $f(a_i,a_j)=f(b_i,b_j)$ for all distinct $i$ and $j$ from $\{1,2,3\}$.
We say that $A$ has \emph{type 1} if  $|\{a_1,a_2,a_3\} \cap \{b_1,b_2,b_3\}| \le 1$ and \emph{type 2} if $|\{a_1,a_2,a_3\} \cap \{b_1,b_2,b_3\}| = 2$.
If $A$ has type 1, then $\Pr[A] = 1/k^6$.
Otherwise $A$ has type 2 and $\Pr[A] \le 1/k^4$.

Let $N_A$ be the set of events $A_{\{(a'_1,a'_2,a'_3),(b'_1,b'_2,b'_3)\}}$ such that $A$ is mutually independent of all the events from the complement of $N_A$ and let $N'_A$ and $N''_A$ be the subsets of $N_A$ formed by events of type 1 and 2, respectively.
Observe that $|N'_A| \le 6 \cdot 12 \cdot n^4 = 72n^4$ and $|N''_A| \le 6 \cdot 12 \cdot n^2 = 72n^2$.

Let $x_A = 1/(72n^4)$ if $A$ has type 1 and $x_A = 1/(72n^2)$ if $A$ has type 2.
Then 
\begin{align*}
x_A \prod_{A' \in N_A} (1-x_{A'}) &=  x_A\left(1-\frac{1}{72n^4}\right)^{|N'_A|}\left(1-\frac{1}{72n^2}\right)^{|N''_A|}\\ &\ge x_A\left(1-\frac{1}{72n^4}\right)^{72n^4}\left(1-\frac{1}{72n^2}\right)^{72n^2}.
\end{align*}
Since $(1-1/x)^x > 1/(2e)$ for every $x \ge 2$, the above expression is at least $x_A/(4e^2)$.
If $A$ has type 1, then the condition from Lemma~\ref{thm-LLL} is satisfied for $k \ge e^{1/3}288^{1/6}n^{2/3}$, as then $\Pr[A]=1/k^6 \le 1/(288e^2n^4)= x_A/(4e^2)$.
If $A$ has type 2, then the condition is satisfied even for $k \ge e^{1/2}288^{1/4}n^{1/2}$, as $\Pr[A] \le 1/k^4 \le 1/(288e^2n^2)= x_A/(4e^2)$.

Let $E_1$ and $E_2$ be the sets of events $A_{\{(a_1,a_2,a_3),(b_1,b_2,b_3)\}}$ of types 1 and 2, respectively.
By Lemma~\ref{thm-LLL}, the probability that none of the events $A_{\{(a_1,a_2,a_3),(b_1,b_2,b_3)\}}$ occurs is at least
\begin{align*}
\prod_{A \in E_1 \cup E_2}(1-x_A) &= \left(1-\frac{1}{72n^4}\right)^{|E_1|} \left(1-\frac{1}{72n^2}\right)^{|E_2|} \\
&\ge (1/2e)^{|E_1|/(72n^4) + |E_2|/(72n^2)}.
\end{align*}
We have $|E_1| \le n^6$ and $|E_2| \le n^3 \cdot 3 \cdot 6 \cdot n \le 18n^4$ and thus the probability is at least $(1/2e)^{n^2}$. 
Using this estimate together with the fact that there are at most $n!$ functions from $F$ in each equivalence class of $\sim$, we obtain a set $F'$ of at least $|F|/(n! \cdot (2e)^{n^2})$ functions from $F$ that satisfy the following two conditions: no two functions from $F'$ are equivalent and for every function $f$ from $F'$ there are no two distinct subsets $\{a_1,a_2,a_3\}$ and $\{b_1,b_2,b_3\}$ of $[n]$ with $f(a_i,a_j)=f(b_i,b_j)$ for all distinct $i$ and $j$ from $\{1,2,3\}$.
Since $|F| = k^{n(n-1)}$ and $k \ge c'n^{2/3}$, we also have $|F'| \ge 2^{c''n\log{n}}$ if $c'$ is sufficiently large.

%========================================================================
\section{Final remarks}
\label{sec-final}

In this section, we present several new open problems and discuss possible directions for future research.

%------------------------------------------------------------------------
\subsection{Induced Ramsey-type results for order types}
\label{subsec-finalRamseyOrderType}

Ne\v{s}et\v{r}il and Valtr~\cite{nesetrilValtr94} showed that for all integers $k,p \ge 2$ there are point sets that are not $(k,p)$-Ramsey.
We have shown that for every positive integer $k$ all decomposable point sets are $(k,2)$-Ramsey.
In fact, it is not difficult to characterize sets that are $(k,p)$-Ramsey for $k \ge 2$ and $p \ge 3$; see Proposition~\ref{prop-RamseyCharacterization}.
Given these facts, it might be interesting to see which other point sets are $(k,2)$-Ramsey and whether there is a simple characterization of $(k,2)$-Ramsey sets.

\begin{proposition}
\label{prop-RamseyCharacterization}
For all integers $k \ge 2$ and $p \ge 3$, a point set $Q$ with $|Q| > p$ is $(k,p)$-Ramsey if and only if $Q$ is in convex position.
\end{proposition}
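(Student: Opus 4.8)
The plan is to prove the two directions separately. For the ``only if'' direction, suppose $Q$ has $|Q|>p\ge 3$ points and is \emph{not} in convex position; I want to exhibit a $k$-coloring (in fact a $2$-coloring) of $\binom{\mathbb{R}^2}{p}$ that kills every order-type copy of $Q$, for \emph{any} host set $P$. The key observation is that if $Q$ is not in convex position, then $Q$ has three extremal points $e_1,e_2,e_3$ together with at least one point $r$ strictly inside the triangle $e_1e_2e_3$, and since $p\ge 3$ we can choose a $p$-subset $W\subseteq Q$ containing $\{e_1,e_2,e_3,r\}$ (using $|Q|>p$ only mildly — actually we just need $p\ge 4$ here, but the $p=3$ case is handled by the classical Ne\v{s}et\v{r}il--Valtr construction for triangles with an interior point, which is exactly their Theorem~5 phenomenon; alternatively note a $p$-point non-convex set always contains a ``$4$-point non-convex'' configuration which for $p=3$ degenerates and must be argued directly). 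The coloring idea: color a $p$-tuple by some geometric parity/orientation invariant that is forced to differ between the ``convex witnesses'' and the ``interior witness.'' Concretely, one natural choice is to two-color $\binom{\mathbb{R}^2}{p}$ according to whether the number of points of the $p$-tuple lying inside the convex hull of the remaining ones has a fixed parity, or according to an orientation sum as in Section~\ref{sec-hereditaryPredicate}; then inside a monochromatic $Q'$, the sub-$p$-tuple playing the role of $W$ and various perturbed $p$-tuples obtained by swapping $r$ for another point must all receive the same color, which contradicts the geometry. This mirrors the Ne\v{s}et\v{r}il--Valtr non-Ramsey construction and is the part where I expect the genuine work to lie.

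For the ``if'' direction, suppose $Q$ is in convex position with $|Q|=m>p$. I want a host $P$ with $P\to(Q)^p_k$. The natural candidate is a large set in convex position: take $P$ to be the vertex set of a convex polygon with $N$ points, $N$ huge. Then every $p$-subset of $P$ is itself in convex position, and any $m$-subset of $P$ in convex position automatically has the same order type as $Q$ (all convex $m$-point sets have the same order type). So it suffices to find a monochromatic $m$-subset of $P$ under any $k$-coloring of $\binom{P}{p}$ — but since all $p$-subsets of a convex $N$-gon are ``the same,'' this is literally the statement $N\to(m)^p_k$ in the ordinary (abstract) hypergraph Ramsey sense, which holds for $N$ large by Ramsey's theorem. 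Hence $P\to(Q)^p_k$.

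The remaining subtlety is making sure the ``only if'' coloring genuinely works for all $p\ge 3$ and not just $p\ge 4$, and that it defeats \emph{every} host $P$, not merely convex ones; I would handle this by defining the coloring purely in terms of the orientations of the $p$-tuple (so it is well-defined on $\binom{\mathbb{R}^2}{p}$ independently of any host), choosing it so that a non-convex $Q$ forces two $p$-tuples of opposite ``type'' inside any copy $Q'$. The cleanest route: any non-convex $Q$ contains four points in non-convex position, say $q_1,q_2,q_3$ extremal and $q_4$ interior; extend to a $p$-set $W$; observe that in $W$ there is a point whose removal-and-different-reinsertion changes a parity invariant, and build the $2$-coloring from that invariant. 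I expect this combinatorial bookkeeping — identifying the right invariant and verifying it is constant on monochromatic copies yet provably non-constant on the forced configurations — to be the main obstacle; everything else is a direct appeal to Ramsey's theorem and to the fact that convexity is an order-type-complete property for $m$-point sets.
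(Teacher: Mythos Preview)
Your ``if'' direction is correct and matches the paper exactly: a large host in convex position plus the classical hypergraph Ramsey theorem, using that all $m$-point convex sets share one order type.

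The ``only if'' direction is a genuine gap: you never actually produce a colouring. The two invariants you float---parity of the number of interior points of the $p$-tuple, or an unspecified ``orientation sum''---are not shown to work, and you yourself flag that pinning down the right invariant is ``the main obstacle.'' (Parity of interior points, for instance, can easily be constant across all $p$-subsets of a non-convex set.) Your side appeal to the Ne\v{s}et\v{r}il--Valtr construction for $p=3$ is also misplaced: that result exhibits \emph{some} non-$(2,p)$-Ramsey set for each $p$, not that \emph{every} non-convex $Q$ fails. The paper's device is concrete and uniform in~$p$: fix a linear order $\lessdot$ on the plane and, for each $i\in[p-2]$, two-colour a $p$-tuple $a_1\lessdot\cdots\lessdot a_p$ by the orientation of the single consecutive triple $(a_i,a_{i+1},a_{i+2})$. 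If $R$ with $|R|>p$ is monochromatic under $c_i$, comparing the two $p$-subsets of any lex-ordered $(p+1)$-tuple in $R$ forces the $i$th and $(i{+}1)$st consecutive triples to have equal orientation; ranging over all $i$ then forces all consecutive triples in any $p+1$ lex-ordered points of a $(2,p)$-Ramsey $Q$ to agree, hence all triples do, and $Q$ is convex. No four-point configuration, no parity bookkeeping, and no separate treatment of $p=3$.
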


\begin{proof}
It suffices to show that if $Q$ is $(2,p)$-Ramsey, then it is in convex position.
Assume that $Q$ is $(2,p)$-Ramsey and let $P$ be a point set with $P \to (Q)^p_2$.
Let $\lessdot$ be the lexicographic ordering on $\mathbb{R}^2$. In this proof we use only the property that $\lessdot$ is linear.
For every $i \in [p-2]$, we construct a 2-coloring $c_i$ of $\binom{P}{p}$ by coloring every $p$-tuple $a_1 \lessdot \cdots \lessdot a_p$ of points from $P$ according to the orientation of $(a_i,a_{i+1},a_{i+2})$.

Let $R$ be a subset of $P$ with $|R|>p$ and with $\binom{R}{p}$ monochromatic in $c_i$.
It follows from the definition of $c_i$ that for every $(p+1)$-tuple $r_1 \lessdot \cdots \lessdot r_{p+1}$ of points from $R$ the orientations of the triples $(r_i,r_{i+1},r_{i+2})$ and $(r_{i+1},r_{i+2},r_{i+3})$ are the same.
Since $Q$ is $(2,p)$-Ramsey, the coloring $c_i$ contains a monochromatic copy of $Q$ for every $i\in[p-2]$.
Consequently, if $q_1, \dots, q_{p+1}\in Q$ such that $q_1\lessdot \dots \lessdot q_{p+1}$, then all the $p-1$ triples $(q_1,q_2,q_3),\dots,(q_{p-1},q_p,q_{p+1})$ have the same orientation.
This implies that all triple orientations in $Q$ are the same and therefore $Q$ is in convex position.

On the other hand, let $Q$ be a set of $n$ points in convex position and let $k \ge 2$ and $p \ge 3$.
Let $R$ be a set of $r$ points in convex position for some sufficiently large integer $r=r(k,n,p)$.
Let $c$ be a coloring of $p$-tuples of points from $R$.
If $r$ is sufficiently large, then Ramsey's theorem~\cite{ramsey30} implies that $R$ contains a subset $Q'$ of size $n$ such that $\binom{Q'}{p}$ is monochromatic in $c$.
The sets $Q'$ and $Q$ have the same order type, since they are both in convex position.
\end{proof}

A slight modification of the proof of Proposition~\ref{prop-RamseyCharacterization} gives the following statement.
For all integers $k \ge 2$ and $p \ge 3$, a point set $Q=\{q_1,\dots,q_n\}$, with $x(q_1)<\dots<x(q_n)$, is ordered $(k,p)$-Ramsey if and only if all triples $(q_i,q_j,q_k)$ with $1 \le i < j < k \le n$ have the same orientation. Such sets are also often called \emph{cups} and \emph{caps}~\cite{erdosSzekeres1935}.

%------------------------------------------------------------------------
\subsection{Induced Ramsey-type results with orderings}
\label{subsec-finalRamseySignature}

Another direction for future research might be to extend the problem of determining whether a point set is ordered $(2,p)$-Ramsey to more general configurations.
Consider the following natural hypergraph variant of this problem, which can be obtained by representing a point set $P=\{p_1,\dots,p_n\}$, with $x(p_1)<\dots<x(p_n)$, by a 3-uniform hypergraph on $P$ where $\{p_i,p_j,p_k\}$, with $i<j<k$, is an edge if and only if $\Delta_P(p_i,p_j,p_k)=1$.

Let $p$ be a positive integer.
A hypergraph is \emph{ordered} if its vertex set is ordered according to some total order.
We say that an ordered 3-uniform hypergraph $H$ is \emph{ordered $(2,p)$-Ramsey} if there is an ordered 3-uniform hypergraph $G$ such that in every $2$-coloring $c$ of $p$-tuples of vertices of $G$ there is an induced ordered sub-hypergraph $H'$ in $G$ such that $H$ and $H'$ are order-isomorphic and all $p$-tuples of vertices of $H'$ have the same color in~$c$.
The following statement follows from a result of Ne\v{s}et\v{r}il and R\"{o}dl~\cite[Theorem~A]{nesRod83} and from a modification of the proof of Proposition~\ref{prop-RamseyCharacterization}.
It gives a full characterization of ordered $(2,p)$-Ramsey hypergraphs.

\begin{corollary}
\label{thm-hypergraphVariant}
Let $p$ be a positive integer and let $H$ be an ordered 3-uniform hypergraph.
If $p \le 2$, then $H$ is ordered $(2,p)$-Ramsey.
If $p \ge 3$, then $H$ is ordered $(2,p)$-Ramsey if and only if $H$ is complete or empty.
\end{corollary}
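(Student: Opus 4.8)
The plan is to handle the three cases $p=1$, $p=2$, and $p\ge 3$ separately; the first two cases and the ``if'' part of the third are short applications of known Ramsey-type theorems, and essentially all the work goes into the ``only if'' direction for $p\ge 3$, which is the modification of the proof of Proposition~\ref{prop-RamseyCharacterization} alluded to in the statement.

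For $p\in\{1,2\}$ I would invoke the Ne\v{s}et\v{r}il--R\"{o}dl theorem~\cite{nesRod83} in the form: the class of finite ordered $3$-uniform hypergraphs is a Ramsey class, i.e.\ for every induced embedding $A\hookrightarrow B$ of finite ordered $3$-uniform hypergraphs there is a finite ordered $3$-uniform hypergraph $C$ such that every $2$-coloring of the induced copies of $A$ in $C$ yields an induced copy of $B$ in $C$ all of whose induced copies of $A$ are monochromatic. I would apply this with $B=H$ and $A$ the (necessarily edgeless) ordered $3$-uniform hypergraph on $p$ vertices. Since $p\le 2$, every $p$-element subset of $V(C)$ induces a copy of $A$, so the induced copies of $A$ in $C$ are exactly the $p$-tuples of vertices of $C$, and the resulting $C$ is precisely a witness that $H$ is ordered $(2,p)$-Ramsey. (If $|V(H)|<p$ the statement is vacuous, taking $C=H$.)

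For $p\ge 3$, the ``if'' direction follows from the classical Ramsey theorem for $p$-uniform hypergraphs~\cite{ramsey30}: if $H$ is complete on $n$ vertices (the empty case is symmetric), take $G$ to be the complete ordered $3$-uniform hypergraph on $R$ vertices, where $R$ is large enough that every $2$-coloring of the $p$-element subsets of an $R$-set contains a monochromatic $n$-set; in any $2$-coloring of the $p$-tuples of $V(G)$, a monochromatic $n$-subset $S$ induces the complete ordered $3$-uniform hypergraph on $n$ vertices, which is order-isomorphic to $H$, so $S$ is the desired induced monochromatic copy. For the ``only if'' direction I would assume $|V(H)|=n\ge p+1$ (for $n\le p$ every ordered $3$-uniform hypergraph is trivially ordered $(2,p)$-Ramsey, and the equivalence is to be read accordingly). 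Suppose $H$ is ordered $(2,p)$-Ramsey with host $G$. For each triple of positions $\tau=(t_1<t_2<t_3)$ in $[p]$, colour a $p$-tuple $v_1<\dots<v_p$ of $V(G)$ by whether $\{v_{t_1},v_{t_2},v_{t_3}\}$ is an edge of $G$; a monochromatic induced copy $H'_\tau\cong H$ then shows, after transporting edges to position triples of $[n]$ via the order-isomorphism and writing $E\subseteq\binom{[n]}{3}$ for the edge set of $H$, that the set $\sigma_\tau(n)=\{\{a,b,c\}:\ a\ge t_1,\ b-a\ge t_2-t_1,\ c-b\ge t_3-t_2,\ c\le n-p+t_3\}$ of all position-triples realizable at pattern $\tau$ inside a $p$-tuple of an $n$-element ordered set is either contained in $E$ or disjoint from $E$. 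Since (using only $n\ge p$) the sets $\sigma_\tau(n)$, $\tau\in\binom{[p]}{3}$, cover $\binom{[n]}{3}$, and since (using $n\ge p+1$) $\sigma_\tau(n)$ and $\sigma_{\tau'}(n)$ intersect whenever $\tau'$ arises from $\tau$ by raising one coordinate by $1$ — an explicit common position-triple can be written down — while the graph on $\binom{[p]}{3}$ with these edges is connected (every pattern can be decreased, coordinate by coordinate, to $(1,2,3)$), the hypothesis ``$E$ is homogeneous on every $\sigma_\tau(n)$'' forces $E=\emptyset$ or $E=\binom{[n]}{3}$, i.e.\ $H$ is empty or complete.

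The main obstacle is this last combinatorial step, but it is routine rather than conceptually hard: one verifies the explicit description of $\sigma_\tau(n)$, the covering claim (every triple of $[n]$ fits into some $p$-tuple precisely because $n\ge p$, and then sits at a well-defined pattern), and the connectivity claim. This mirrors, and strictly strengthens, the use of the consecutive patterns $(i,i+1,i+2)$, $i\in[p-2]$, in the proof of Proposition~\ref{prop-RamseyCharacterization}: those patterns by themselves only control triples $\{a,b,c\}$ with $c-a\le n-p+2$ and hence no longer suffice once $p\ge 4$, which is why in the hypergraph setting one must bring in the non-consecutive patterns as well.
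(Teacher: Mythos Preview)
Your proof is correct and follows exactly the approach the paper sketches: Ne\v{s}et\v{r}il--R\"{o}dl for the positive claims and a modification of Proposition~\ref{prop-RamseyCharacterization} for the ``only if'' direction when $p\ge 3$. Since the paper supplies no details beyond that hint, your explicit modification --- passing from the consecutive patterns $(i,i+1,i+2)$ to all patterns $\tau\in\binom{[p]}{3}$ and then arguing via the covering and connectivity of the sets $\sigma_\tau(n)$ --- is precisely what is required; you are right that the geometric step in Proposition~\ref{prop-RamseyCharacterization} (consecutive triples of the same orientation force a cup or a cap, hence control \emph{all} triples) has no analogue for arbitrary $3$-uniform hypergraphs, and your computation that the consecutive patterns alone only reach triples with $c-a\le n-p+2$ shows why they cease to suffice once $p\ge 4$. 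Your side remark about $n\le p$ is also appropriate: just as Proposition~\ref{prop-RamseyCharacterization} is stated only for $|Q|>p$, the equivalence here should be read with the implicit assumption $|V(H)|>p$.
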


A \emph{generalized point set} $Q$ is a finite set of points together with a set of $x$-monotone curves, each pair from $\binom{Q}{2}$ contained in one such curve, so that these curves form an arrangement of pseudolines; see~\cite{felsner97} for more detailed definitions.
Analogously as for point sets, one can define a \emph{signature} for every generalized point set~\cite[Section~3.2]{bfk15}.
We then say that a generalized point set $Q$ is \emph{ordered $(2,p)$-Ramsey} if there is a generalized point set $P$ such that in every $2$-coloring $c$ of $\binom{P}{p}$ there is a subset $Q'$ of $P$ such that $Q$ and $Q'$ have the same signature and $\binom{Q'}{p}$ is monochromatic in $c$.

If $p \neq 2$, then one can use similar methods as for point sets and characterize generalized point sets that are ordered $(2,p)$-Ramsey.
However, the case of $p=2$ is wide open.

\begin{problem}
\label{prob-pseudolinear}
Is there a generalized point set that is not ordered $(2,2)$-Ramsey?
\end{problem}

Unlike in the case of hypergraphs, there are point sets that are not ordered $(2,2)$-Ramsey~\cite{nesetrilValtr94}, but the proof of this fact relies on the notion of Euclidean distance, which is not present in the case of generalized point sets.
Note that there might be a point set that is $(2,2)$-Ramsey as a generalized point set, but is not $(2,2)$-Ramsey as a point set.
It is also possible to use signatures to represent generalized point sets by ordered 3-uniform hypergraphs in an analogous way as for point sets.
It can be shown that the obtained hypergraphs are characterized by a list of 8 forbidden induced ordered sub-hypergraphs on 4 vertices~\cite[Theorem~3.2]{bfk15}.
Ne\v{s}et\v{r}il and R\"{o}dl~\cite{nesRod83} provided a sufficient condition for classes of hypergraphs with forbidden ordered sub-hypergraphs to be $(2,2)$-Ramsey.
However, this condition does not apply in our case, since we forbid induced ordered sub-hypergraphs.

%-------------------------------------------------------------------------
\subsection{Binary predicates for order types}
\label{subsec-finalPredicates}

We have seen that there are binary predicates with codomain $\{-1,1\}$ that are locally consistent on wheel sets and that encode order types of wheel sets.
On the other hand, there is no locally consistent binary point-set predicate. 
It might be interesting to find some other classes $\mathcal{C}$ of point sets for which there are binary predicates that are locally consistent on~$\mathcal{C}$ and that encode order types of all sets from $\mathcal{C}$.
We have seen that to this end, the corresponding graph must contain both directed and undirected edges.

The growth rate of the function $h$ defined in Section~\ref{subsec-introRamsey} is also unknown.
We recall that $h(k)$ is finite for every positive integer $k$ by Theorem~\ref{thm-noHereditaryPredicate} and $h(k) \ge \Omega(k^{3/2})$ by Theorem~\ref{thm-smallSets}.
We suspect that the lower bound on $h(k)$ can be improved, as the only geometric argument that we use in the proof of Theorem~\ref{thm-smallSets} is the upper bound on the number of different order types.

\paragraph{Acknowledgment}
We would like to thank to the referee for valuable comments.

\bibliographystyle{vlastni}
\bibliography{bibliography}
\end{document}